\newcommand{\PP}{{\mathbb{P}}}
\newcommand{\CC}{{\mathbb{C}}}
\newcommand{\RR}{{\mathbb{R}}}
\newcommand{\ZZ}{{\mathbb{Z}}}
\newcommand{\Gr}{{\rm Gr}}
\newtheorem*{theorem*}{Theorem}
\newtheorem{theorem}{Theorem}[section]
\newtheorem{definition}[theorem]{Definition}
\newtheorem{proposition}[theorem]{Proposition}
\newtheorem{lemma}[theorem]{Lemma}
\providecommand{\customgenericname}{}
\newcommand{\newcustomtheorem}[2]{
  \newenvironment{#1}[1]
  {
   \renewcommand\customgenericname{#2}
   \renewcommand\theinnercustomgeneric{##1}
   \innercustomgeneric
  }
  {\endinnercustomgeneric}
}
\begin{document}
\title{Theoretical and Numerical Analysis of 3D Reconstruction Using Point and Line Incidences}
\author[1]{Felix Rydell}
\author[2]{Elima Shehu}
\author[3]{Angélica Torres}
\affil[1]{{\small KTH Royal Institute of Technology, Stockholm, Sweden}}
\affil[2]{{\small University of Osnabr\"uck, Osnabr\"uck, Germany}}
\affil[2]{{\small Max Planck Institute for Mathematics in the Sciences, Leipzig, Germany}}
\affil[3]{{\small Centre de Recerca Matemàtica, Barcelona, Spain}}

\maketitle

\begin{abstract}

  We study the joint image of lines incident to points, meaning the set of image tuples obtained from fixed cameras observing a varying 3D point-line incidence. We prove a formula for the number of complex critical points of the triangulation problem that aims to compute a 3D point-line incidence from noisy images. Our formula works for an arbitrary number of images and measures the intrinsic difficulty of this triangulation. Additionally, we conduct numerical experiments using homotopy continuation methods, comparing different approaches of triangulation of such incidences. In our setup, exploiting the incidence relations gives a notably faster point reconstruction with comparable accuracy.

\end{abstract}

\section*{Introduction}

The Structure-from-Motion pipeline aims to estimate the camera positions and the relative position of the objects present in a given set of images. The process starts by identifying point and line features in one image that are recognizable as the same points or lines in another. These matched features, that we call \textit{correspondences}, are used to estimate the camera position and orientation, which then allow for the triangulation step where the position of the points and lines is estimated.  

In this work, we study the triangulation of points and lines satisfying a certain incidence relation. Specifically, we focus on the triangulation of points contained in a single line; see \Cref{fig:L1_and_P1}. We note that our methods can also be used when multiple lines going through a fixed point. We develop the theory for the triangulation of these problems for an arbitrary number of pinhole cameras assuming complete visibility. 

The triangulation of points is well understood and, in practice, it is efficiently implemented. However, to the best of our knowledge, incidence relations are not considered in these implementations. The inclusion of line features and incidence relations in the triangulation process could give a more accurate estimation of scenes as lines are more robust to noise than points, and incidence relations appear frequently in interior scenes and man-made scenarios; see \cite{Schindler2006LineBasedSF}, \cite{Huang2018LearningTP}, \cite{Liu_2023_CVPR}. Additionally, in some real data sets standard feature detection algorithms fail due to a lack of point correspondences but succeed when line correspondences are taken into account \cite{fabbri2020trplp}.

\begin{figure}
    \centering
    \includegraphics[width = 0.50\textwidth]{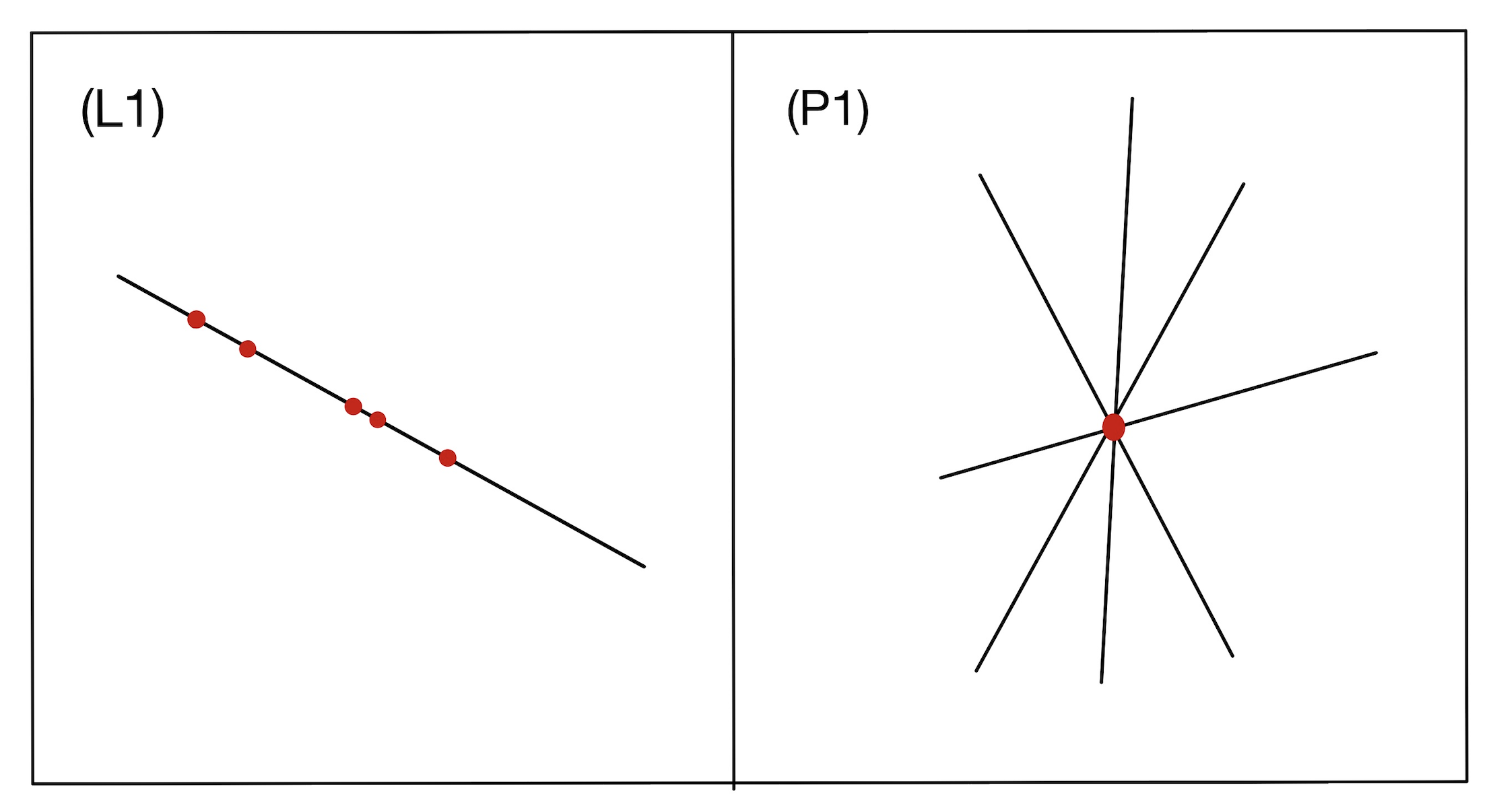}
    \caption{The illustration of two different types of incidence relations: (L1)~represents the scenario where multiple points are incident to a line, while (P1) represents the scenario where multiple lines are incident to a point.}
    \label{fig:L1_and_P1}
\end{figure}

The main tools in our work are \textit{(algebraic) varieties}, that is, the vanishing sets of systems of polynomial equations. Algebraic varieties have been used extensively to study triangulation of point correspondences \cite{agarwal2019ideals, EDDegree_point, faugeras1995geometry}, line correspondences \cite{faugeras1993three,breiding2022line, Kileel_Thesis} and minimal problems \cite{duff2019plmp,Duff20_partialvis} to name a few. In these works, algebraic varieties arise naturally due to the algebraic nature of pinhole cameras.

A \textit{pinhole camera}, is modeled by a (complex) projective linear map $C: \mathbb{P}^3 \dashrightarrow \mathbb{P}^2$ defined by a $3\times 4$ matrix~$C$ of full rank that takes a point $X\in\PP^3$ and sends it to $CX\in\PP^2.$ 
A camera arrangement with $m\geq 2$ cameras is denoted by $\mathcal{C}=(C_1,\ldots ,C_m)$, and the \textit{joint camera map}
\begin{align}
\begin{split}
\Phi_\mathcal{C}:\mathbb{P}^3&\dashrightarrow (\mathbb{P}^2)^m,\\
    X&\mapsto (C_1X,\ldots,C_mX),
    \end{split}
\end{align}
models the process of taking the images of a point $X$ in homogeneous coordinates with $m$ cameras. For fixed cameras, the \textit{(point) multiview variety} $\mathcal{M}_\mathcal{C}$ is the smallest variety that contains all point correspondences. An extensive account of the pinhole cameras is given by \cite{Hartley2004}, and a survey of the multiview variety is found in \cite{trager2015joint}. The joint camera map can be extended from the space of points $\PP^3$ to the space of lines, denoted by $\Gr(1,\PP^3)$, where each line is parametrized as the span of two points. The map
\begin{align}
  \begin{split}
\Upsilon_{\mathcal{C}}:\mathrm{Gr}(1,\PP^3)&\dashrightarrow (\PP^2)^m,\\
  \quad L&\mapsto (C_1\cdot L,\ldots, C_m\cdot L).  
  \end{split}
\end{align}
models the image of a line~$L$ taken by the cameras of~$\mathcal C$. To be precise, if $u,v$ span the line $L$ in $\PP^3$, then $C\cdot L$ is defined as $\ell=Cu\times Cv\in \PP^2$, where $\times$ is the cross product. Observe that $\{x\in \PP^2:\ell^Tx=0\}$ equals $\mathrm{span}\{Cu,Cv\}$. Recently, in \cite{breiding2022line}, the authors study the \textit{line multiview variety} referred to as $\mathcal{L}_\mathcal{C}$, which is the smallest variety containing all line correspondences.

Our main contribution is the definition and study of the \textit{anchored} point and line multiview varieties. Given a fixed line $L$ in $\mathbb P^3$, the anchored point multiview variety, $\mathcal{M}_\mathcal{C}^L$, is defined as the smallest variety containing all point correspondences coming from points in $L$; and the anchored line multiview variety, $\mathcal{L}_\mathcal{C}^X$, is the smallest variety containing the line correspondences coming from lines passing through a given point $X\in\PP^3$. 

For these new varieties we prove formulas for their \textit{Euclidean distance degree} (EDD), which are a measurement of the complexity for error correction using exact algebraic methods \cite{draisma2016euclidean}. Specifically, we prove the following theorem:
\begin{theorem*} Let $\mathcal{C}$ be a generic arrangement of $m$ cameras.  
\begin{enumerate}
    \item $\mathrm{EDD}( \mathcal{M}_\mathcal{C}^L)=3m-2 $.
    \item If $m\ge 3$, then $\mathrm{EDD}( \mathcal{L}_\mathcal{C}^X)=\frac{9}{2}m^2-\frac{19}{2}m+3.$
\end{enumerate}
\end{theorem*}
We provide a precise definition of this degree in \Cref{ss: EDD}. Previous work on EDD for multiview varieties includes \cite{EDDegree_point,harris2018chern} and~\cite[Section 5]{breiding2022line}. The EDD of the point multiview variety is  $\mathrm{EDD}(\mathcal{M}_\mathcal{C})=\frac{9}{2}m^3-\frac{21}{2}m^2+8m-4,$ according to \cite{EDDegree_point}. The fact that the EDDs of the anchored multiview varieties are polynomials of smaller degrees suggests that they are less complex for the purpose of data correction. This conclusion is backed by the results of our numerical experiments using \texttt{HomotopyContinuation.jl}  \cite{HC.jl}, presented in \Cref{s: NUM}. 

Another contribution of our work is the numerical simulations for the triangulation of points contained in a line using the anchored multiview varieties. We present different approaches to triangulating data of type (L1) that differ from the traditional method of fitting point correspondences to the multiview variety $\mathcal{M}_\mathcal{C}$. For~$m=2$ views, our implementation is notably faster than the traditional triangulation of points described above, while the accuracy is comparable. For $m=3$ views we get both higher accuracy and faster speed by using $\mathcal{L}_\mathcal{C}$, compared to usual point triangulation. All of our proposed methods outperform the traditional approaches in terms of run-time and give a comparably accurate result. In practice, special software and hardware are used for triangulation, and based on our experiments, we believe that these approaches could be implemented efficiently with good results.

\subsection*{Related work} 

\paragraph{Algebra.} Algebraic geometry, whose connection to computer vision is well established, is our main tool for the theoretical study of the triangulation of problem (L1). Fundamental theory regarding the point multiview variety is found in \cite{faugeras1995geometry, Hartley1997, Hartley2004}, in particular for two and three views. The paper \cite{trager2015joint} by Trager et. al. serves as a survey for the algebraic properties of this multiview variety. For applications, finding a good set of polynomial constraints satisfied by the point correspondences is important, this is precisely the work of \cite{agarwal2019ideals} by Agarwal et al.

Regarding the algebra of lines in a computer vision setting, Kileel presented in \cite[Definition 3.9, Theorem 3.10]{Kileel_Thesis} several types of multiview varieties with respect to a point-line incidence relation in $\PP^3$ in 3 views, and their basic properties. One of those types is called the LLL-multiview variety and constitutes a special case of the more recent work by Breiding et. al. \cite{breiding2022line}, where they study algebraic properties of the line multiview variety. Furthermore, a recent manuscript \cite{2303.02066} studies the polynomial constraints satisfied by line correspondences.

\paragraph{Projective Reconstruction.} Different approaches to the triangulation of points have been considered in the literature, in particular for two views \cite{hartley1997triangulation,kanazawa1995reliability,kanatani2005statistical,kanatani2008triangulation,beardsley1994navigation,beardsley1997sequential}. Hartley and Sturm compare many different approaches in \cite{hartley1997triangulation}, including the ``midpoint'' method. The midpoint method inputs a point correspondence in two views and outputs the midpoint of the line segment determined by the points on the two back-projected lines that are closest to each other. In other words, it finds the midpoint of the common perpendicular to the two back-projected lines. This is a natural approach, but it has several downsides, also pointed out in \cite{beardsley1994navigation,beardsley1997sequential}. Importantly, it is not \textit{projectively invariant}. A projectively invariant reconstruction has the property that acting on the cameras by a global projective transformation induces the same action on the triangulated point or line. Instead, Hartley and Sturm propose minimizing the reprojection error as the optimal triangulation method, which is accepted as a standard formulation \cite{stewenius2005hard} and it is projectively invariant. In \cite{stewenius2005hard}, the focus is on triangulation via minimizing reprojections errors in three views and the authors point out that three views often lead to greater stability and stronger disambiguation compared to two views.

\paragraph{Implementation of Line Reconstruction.} From the algorithmic point of view, the simultaneous re\-cons\-truction of point and line features have been studied specially with the goal of reconstructing line segments. For example, \cite{BARTOLI2005416} provides a thorough overview of the structure-from-motion pipeline using lines, going through different methods for line parameterization and error correction depending on such parameterizations. In \cite{Quan}, Quan and Takeo study the algebraic structure of line correspondences with uncalibrated affine cameras. They reduce the problem of reconstructing affine lines to the reconstruction of projective points in a lower-dimensional projective camera. In~\cite{Hartley1997}, Hartley and coauthors provide an algorithm for the reconstruction of point and line features where 3D lines are parameterized by their projections in 2 views (as the intersection of two back-projected planes).
Micusik and Wildenauer reconstruct lines to estimate line segments, but only use incident points for error correction~\cite{Micusik_Wildenauer}. Furthermore in \cite{L_2_ref}, the authors propose a technique for 3D reconstruction incorporating line segments.  
They assert that this method surpasses traditional approaches, especially in efficiency while getting accurate
results. Finally, in \cite{L_3_ref}, the authors present a framework for the computation of the relative motion between two images using a triplet of lines. 

\bigskip

This paper is structured as follows. In the first part of \Cref{s: AMV} we formally introduce anchored multiview varieties and study some properties. In the second part of \Cref{s: AMV}, we study their smoothness and find their Euclidean Distance Degree. Finally, in \Cref{s: NUM}, we provide a numerical analysis of different approaches to reconstructing point correspondences incident to a line correspondence. The proofs of all our results are included in the Supplementary Material together with a small background on the concepts of Algebraic Geometry used for these results and the code for the numerical experiments. The code used for the numerical experiments can be found in the GitHub repository \url{github.com/amtorresbu/Anchored_LMV.git}.

\paragraph{Acknoledgements.} The authors thank Kathlén Kohn for helpful discussions, Lukas Gustafsson for the initial insight of \Cref{thm: anchoredmulti-isomorphism}, and Viktor Larsson for pointing out relevant references at the start of this project. 
 Felix Rydell and Angélica Torres were supported by the Knut and Alice Wallenberg Foundation within their WASP (Wallenberg AI, Autonomous
Systems and Software Program) AI/Math initiative. Elima Shehu is funded by the Deutsche Forschungsgemeinschaft (DFG, German Research Foundation), Projektnummer 445466444. Angélica Torres is currently supported by the Spanish State Research Agency, through the Severo Ochoa and María de Maeztu Program for Centers and Units of Excellence in R\&D.

\section{Anchored Multiview Varieties} \label{s: AMV}

A \textit{camera} refers to a full-rank $3\times 4$ matrix. A \textit{camera arrangement} is a collection $\mathcal{C}=(C_1,\ldots ,C_m)$ of $m\ge 2$ cameras whose \textit{centers}, i.e. kernels, are all distinct.

A \textit{variety} is the solution set to a system of polynomial equations. The \textit{Zariski closure} of a set $U$ is the smallest variety containing $U$. We work in complex projective space, denoted $\PP^n$. This is defined as $(\CC^n\setminus\{0\})/\sim$, where $x\sim y$ if $x$ and $y$ differ by a non-zero constant. The set of lines in $\PP^n$ is denoted as $\Gr(1,\PP^n)$. In the language of algebraic geometry, it is called the \textit{Grassmanian of lines} in $\PP^n$. In general we denote with upper case letters world objects and with lower case letters image objects. For a rational map $\varphi:X\dashrightarrow Y$, we denote $\varphi|_A$ the restriction of $\varphi$ to $A\subseteq X$.

\begin{definition}\label{def:AMV}
       Let $X\in\PP^3$ be a point distinct from all camera centers, and $L$ a line in $\PP^3$ 
       containing none of the camera centers.
       \begin{enumerate}
           \item The \textnormal{anchored point multiview variety}, denoted $\mathcal{M}_\mathcal{C}^L$, is defined as the Zariski closure of the image of the map
           \begin{equation}
               \begin{split}
                   \Phi_\mathcal{C}|_L: L & \dashrightarrow (\PP^2)^m, \\
                   X & \longmapsto (C_1X,\ldots ,C_mX).
               \end{split}
           \end{equation}
           \item The \textnormal{anchored line multiview variety}, denoted $\mathcal{L}_\mathcal{C}^X$, is defined as the Zariski closure of the image of the map
           \begin{equation}
               \begin{split}
                   \Upsilon_\mathcal{C}|_{\Lambda(X)}: \Lambda(X) & \dashrightarrow (\PP^2)^m, \\
                   L & \longmapsto (C_1\cdot L,\ldots ,C_m\cdot L),
               \end{split}
           \end{equation}
           where $\Lambda(X)$ denotes the set of lines in $\PP^3$ 
           that contain $X$ and $\ell_i=C_i\cdot L$ is defined as in the introduction. 
       \end{enumerate}
\end{definition}
The anchored point multiview variety~$\mathcal{M}_\mathcal{C}^L$ is the smallest variety that contains all point correspondences $\Phi_{\mathcal C}(X)$ for $X\in L$. Similarly, the anchored line multiview variety $\mathcal{L}_\mathcal{C}^X$ is the smallest variety that contains all line correspondences $\Upsilon_{\mathcal C}(L)$ for $L$ meeting $X$ and no center. We highlight that our definition of anchored multiview varieties is different from the anchored features in\cite{Sola2012} for monocular EKF-SLAM.

\begin{proposition}\label{prop: first-eq}
    Consider an arrangement of $m$ cameras $\mathcal{C}=(C_1,\ldots,C_m)$, a point $X\in\PP^3$ and a line $L$ in $\mathbb P^3$ satisfying the conditions of~\Cref{def:AMV}. 
    \begin{enumerate}
        \item If there are two different camera centers $c_i$ and $c_j$ such that the span of $\{c_i,c_j,L\}$ is $\PP^3$, then 
        \begin{equation}
            \mathcal{M}_\mathcal{C}^L=\{(x_1,\ldots,x_m)\in \mathcal{M}_{\mathcal{C}}: x_i\in C_i\cdot L\}.
        \end{equation}

        \item If for each camera center $c_i$, the line spanned by $c_i$ and $X$ does not contain any other camera center, then 
        \begin{equation}
            \mathcal{L}_\mathcal{C}^X=\{(\ell_1,\ldots,\ell_m)\in \mathcal{L}_{\mathcal{C}}: C_iX\in \ell_i\}.
        \end{equation}
    \end{enumerate}
\end{proposition}
The proofs for this and all our results are found in the Supplementary Material.


\subsection{Properties of Anchored Multiview Varieties} 

A fundamental property of the anchored multiview varieties is that they are linearly isomorphic to multiview varieties arising from projections $\PP^1\dashrightarrow\PP^1$ and~$\PP^2\dashrightarrow\PP^1,$ respectively. A linear isomorphism is a linear map (given by a matrix) with an inverse that is also linear. 

Consider an arrangement $\widetilde{\mathcal{C}}$ of $m$ full-rank $2\times 2$ matrices, and an arrangement $\widehat{\mathcal{C}}$ of $m$ full-rank $2\times 3$ matrices. Also here we assume $m\ge 2$. We define $\mathcal{M}_\mathcal{\widetilde{C}}^{1,1}$, and $\mathcal{M}_\mathcal{\widehat{C}}^{2,1}$, 
respectively as the Zariski closure of the image of the 
maps
    \begin{equation}
        \begin{split}
            \Phi_{\widetilde{\mathcal C}}: \PP^1&\longrightarrow (\PP^1)^m, \\
            X & \longmapsto (\widetilde C_1X,\ldots,\widetilde C_mX)
        \end{split}
    \end{equation}
and 
    \begin{equation}
        \begin{split}
            \Phi_{\widehat{\mathcal C}}:\PP^2&\dashrightarrow (\PP^1)^m,\\
            X& \longmapsto (\widehat{C}_1X,\ldots \widehat{C}_mX).
        \end{split} 
    \end{equation}

\begin{theorem}\label{thm: anchoredmulti-isomorphism}  $ $ 

\begin{enumerate}
     \item Let $\phi_L:L \to \PP^1$ and $\psi_{\mathcal C,i}:C_i\cdot L\to \PP^1$ be any choices of linear isomorphisms. Let $\widetilde{\mathcal{C}}$ denote the arrangement of matrices $\widetilde{C}_i:=\psi_{\mathcal{C},i} \circ C_i\circ \phi_L^{-1} $. Then 
    \begin{align}
        \psi_{\mathcal{C},L}:=(\psi_{\mathcal C,1},\ldots,\psi_{\mathcal C,m}):\mathcal{M}_\mathcal{C}^L\to  \mathcal{M}_{\widetilde{\mathcal{C}}}^{1,1}
    \end{align}
    is a linear isomorphism.
    \item Let $\phi_X: \Lambda(X)\to \PP^2$ and $\psi_{\mathcal C,i}: \Lambda(C_i X)\to \PP^1$ be any choices of linear isomorphisms. Let $\widehat{\mathcal{C}}$ denote the arrangement of matrices $\widehat{C}_i:=\psi_{\mathcal C,i} \circ C_i\circ \phi_X^{-1} $. Then 
    \begin{align}
        \psi_{\mathcal{C},X}:=(\psi_{\mathcal C,1},\ldots,\psi_{\mathcal C,m}): \mathcal{L}_\mathcal{C}^X\to \mathcal{M}_{\widehat{\mathcal{C}}}^{2,1}
    \end{align}
    is a linear isomorphism.
\end{enumerate}
\end{theorem} 
In \Cref{sss: Red}, we exploit these linear isomorphisms to improve the speed of triangulation for our setting. A consequence of this theorem is also that many results for the anchored multiview varieties translate into results about $\mathcal{M}_{\widetilde{\mathcal{C}}}^{1,1}$ and $\mathcal{M}_{\widehat{\mathcal{C}}}^{2,1}$. 

\begin{proposition}\label{prop: dim} $\mathcal{M}_\mathcal{C}^L$ and $\mathcal{L}_\mathcal{C}^X$ are irreducible. Further, 
\begin{enumerate}
    \item $\mathcal{M}_\mathcal{C}^L$ is isomorphic to $\PP^1$. In particular, $\dim\mathcal{M}_\mathcal{C}^L=1$.
    \item If the span of the centers $c_i$ and the point $X$ are not collinear, then $\dim \mathcal{L}_\mathcal{C}^X=2$. 
\end{enumerate}
\end{proposition}

Using the fundamental matrices of $C_i$ and $C_j$, denoted as $F^{ij}$, we introduce a set of polynomial constraints that must be fulfilled by the correspondences of points or lines in the anchored multiview varieties. These constraints imply that some of the equations obtained from~\Cref{prop: first-eq} are redundant in the generic case. 

\begin{proposition}\label{prop: eq} For a point $X\in \PP^3$ and line $L$ in $\PP^3$, let $\mathcal{C}$ be a generic (random) camera arrangement of $m$ cameras. 
\begin{enumerate}
    \item $x\in \mathcal{M}_\mathcal{C}^L$ if and only if $x_1^TF^{1j}x_j=0$ for every $j=2,\ldots,m$ and $x_i^TC_i\cdot L=0$ for every $i=1,\ldots,m$.
    \item $\ell\in \mathcal{L}_\mathcal{C}^X$ if and only if
    \begin{align}
    \begin{split}
        &\det \begin{bmatrix}
    C_1^T \ell_1 & C_2^T \ell_2 & C_i^T\ell_i 
    \end{bmatrix}=0,\\  
    &\det \begin{bmatrix}
    C_1^T \ell_1 & C_3^T \ell_3 & C_i^T\ell_i 
    \end{bmatrix}=0
    \end{split}
    \end{align}
    for $i=3,\ldots,m$ and $\ell_i^T C_iX=0$ for every $i=1,\ldots,m$.
\end{enumerate}
\end{proposition}

 The determinantal constraints described in the second item correspond to the constraints that are satisfied by elements of the line multiview variety, presented in~\cite{breiding2022line}.

In Supplementary Material Section B we define the \textit{multidegree} of a variety in a product of projective spaces, determine it for the anchored multiview varieties and explain its relevance for computer vision.


\subsection{The Euclidean Distance problem} \label{ss: EDD} 

For a variety $\mathcal{X}\subseteq \RR^n$ and a point $u\in\RR^n$ outside the variety, a natural problem is to find the closest point on $\mathcal X$ to $u$, which corresponds to the optimization problem
\begin{equation}\label{eq :ED_problem}
    \mathrm{minimize}\quad \sum_{i=1}^n(u_i-x_i)^2\quad \textnormal{subject to} \quad x\in \mathcal{X}.
\end{equation}
\Cref{eq :ED_problem} is called the \textit{Euclidean distance problem} and models the process of error correction and fitting noisy data to a mathematical model $\mathcal{X}$. In the case of a smooth variety, defined below, the \textit{Euclidean distance degree} (EDD) is the number of complex solutions to the critical equations of \eqref{eq :ED_problem} \cite{draisma2016euclidean}. The EDD is an estimate of how difficult it is to solve this problem by exact algebraic methods. For a variety $\mathcal X$ in a product of projective spaces $\PP^{n_1}\times \cdots \times \PP^{n_m}$, the EDD is the EDD of $\mathcal X\cap U_1\times \cdots \times U_m\subseteq \RR^{n_1+\cdots +n_m}$, where $U_i$ is a generic affine patch of $\PP^{h_i}$ for each $i$. An \textit{affine patch} $U$ of $\PP^n$ is a subset defined by an affine equation with non-zero constant part, for instance $x_0=1$. We have $U\cong \RR^n$ over the real numbers.

A variety $\mathcal{X}$ is \textit{smooth} at a point $x$ if $\mathcal{X}$ locally around~$x$ looks like Euclidean space. $\mathcal X$ is \textit{smooth} if all its points are smooth. The \textit{singular locus} of a variety is the set of non-smooth points. See \cite{Gathmann} for more details. The singular locus of the point multiview variety is well-understood, see for instance \cite{trager2015joint}, and it is mostly understood for the line multiview variety, see \cite[Section 3]{breiding2022line}. The proposition below guarantees that the anchored multiview varieties are smooth for generic camera arrangements, which helps us to compute its EDD.  

\indent
\begin{proposition}\label{prop: smooth} $ $

\begin{enumerate}
    \item $\mathcal{M}_\mathcal{C}^L$ smooth.
    \item If there are exactly two cameras, or the centers together with the point $X$ span $\PP^3$, then $\mathcal{L}_\mathcal{C}^X$ is smooth. 
\end{enumerate}
\end{proposition}

For the EDD to be relevant in a particular setting, the Euclidean distance has to be a good measurement of distance. This is naturally the case for points in $\RR^2$, i.e. an affine patch of $\PP^2$. This extends to an affine patch of $\mathcal M_{\mathcal C}$. It is perhaps less clear how to best measure distances between lines. Recall that we regard $\mathcal L_{\mathcal C}$ as a subset of $(\PP^2)^m$. Indeed, we identify image lines with the point that defines its normal vector. We choose the Euclidean distance between these normal vectors (in affine patches of $\PP^2$) as our distance between lines.

The theorem below present our computations of the EDDs of the anchored multiview varieties. \\Additionally, our numerical computations using the \texttt{HomotopyContinuation.jl}\cite{breiding2018homotopycontinuation} package in the \texttt{julia}\cite{bezanson2012julia} programming language have confirmed the accuracy of these formulas for $m\le 10$.

\begin{theorem}\label{thm: EDDthm} Let $\mathcal{C}$ be a generic arrangement of $m$ cameras.  
\begin{enumerate}
    \item $\mathrm{EDD}( \mathcal{M}_\mathcal{C}^L)=3m-2 $.
    \item If $m\ge 3$, then $\mathrm{EDD}( \mathcal{L}_\mathcal{C}^X)=\frac{9}{2}m^2-\frac{19}{2}m+3.$
\end{enumerate}
\end{theorem}

We end with an implication of \Cref{thm: anchoredmulti-isomorphism}: There is a direct correspondence between the EDDs of the anchored multiview varieties and $\mathcal{M}_{\widetilde{\mathcal{C}}}^{1, 1},\mathcal{M}_{\widehat{\mathcal{C}}}^{2, 1}$.

\begin{theorem}\label{cor: EDD-p} Let $\widetilde{\mathcal{C}}$ and $\widehat{\mathcal{C}}$ be generic arrangements of cardinality $m$.
\begin{enumerate}
    \item $\mathrm{EDD}( \mathcal{M}_{\widetilde{\mathcal{C}}}^{1, 1})=3m-2 $.
    \item If $m\ge 3$, then $\mathrm{EDD}( \mathcal{M}_{\widehat{\mathcal{C}}}^{2,1})=\frac{9}{2}m^2-\frac{19}{2}m+3.$
\end{enumerate}
\end{theorem}

\section{Numerical Experiments} \label{s: NUM}

In this section we conduct numerical experiments of the triangulation process of incident point and lines. All the code can be found in the Supplementary Material.

By (L1) we refer to point-line arrangements in $\PP^3$ of one line incident to $p$ points, as depicted in~\Cref{fig:L1_and_P1}. Our goal is to reconstruct such arrangements, given point correspondences incident to a single line correspondence across $m$ views. The following is a list of natural approaches for such triangulation given a camera arrangement $\mathcal{C}$. It is not necessarily a complete list. 

\begin{enumerate}[leftmargin =0.5em]
    \item[(L1).0] Triangulate each point correspondence by fitting it to the point multiview variety $\mathcal{M}_{\mathcal{C}}$, i.e. find the closest point correspondence in $\mathcal{M}_{\mathcal{C}}$;
    \item[(L1).1] Reconstruct the 3D line $L$ by back-projecting the image lines from two views. Triangulate the point correspondences by fitting them to the anchored multiview variety $\mathcal{M}_{\mathcal{C}}^L$;
    \item[(L1).2] Triangulate two point correspondences by fitting them to $\mathcal{M}_{\mathcal{C}}$ to get 3D points $X$ and $Y$. Let $L$ be the line they span in $\RR^3$. Triangulate the remaining point correspondences by fitting them to $\mathcal{M}_{\mathcal{C}}^L$;  
    \item[(L1).3] Triangulate one point correspondence by fitting it to $\mathcal{M}_{\mathcal{C}}$ to get a 3D point $X$. Reconstruct the 3D line $L$ by fitting the line correspondence to the anchored variety $\mathcal{L}_\mathcal{C}^X$. Triangulate the remaining point correspondences by fitting them to $\mathcal{M}_{\mathcal{C}}^L$;  
    \item[(L1).4] Reconstruct the 3D line $L$ by fitting the line correspondence to the line multiview variety $\mathcal{L}_\mathcal{C}$. Triangulate the point correspondences by fitting them to the anchored multiview variety $\mathcal{M}_{\mathcal{C}}^L$.
\end{enumerate}  

Approaches (L1).1-4 take the incidence relations into account in the triangulation, whereas (L1).0 does not. Therefore, in contrast to (L1).1-4, the resulting triangulation of (L1).0 does not preserve the point-line incidences; see \Cref{fig:my_label1}. 
\begin{figure}
   \centering
   \includegraphics[width = 0.65\textwidth]{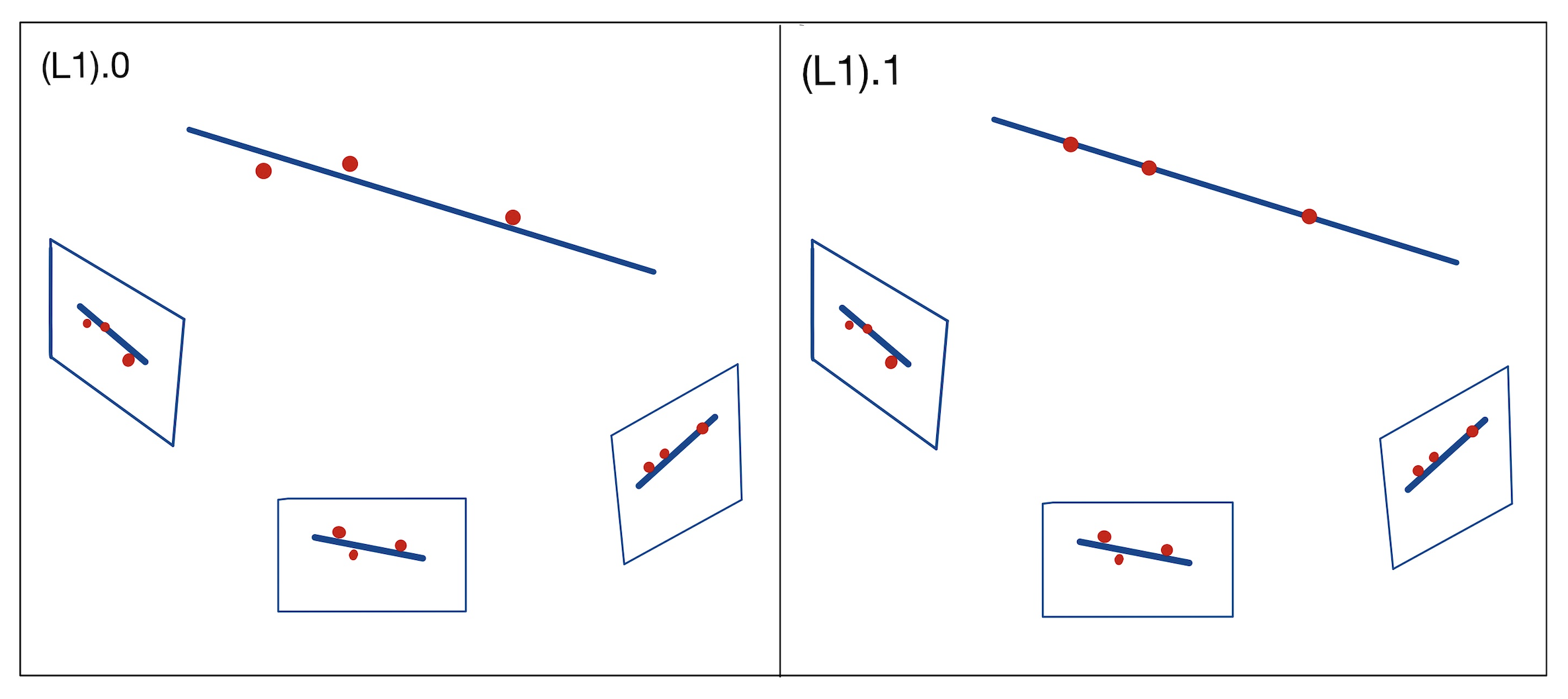}
   \caption{Comparison of (L1).0 and (L1).1 triangulation approaches. (L1).0 expects non-collinear points in the reconstruction, while (L1).1 produces collinear points.}
   \label{fig:my_label1}
\end{figure}

\subsection{Implementation}

Our experiments are implemented in \texttt{HomotopyContinuation.jl} \cite{breiding2018homotopycontinuation} in \texttt{Julia} \cite{bezanson2012julia}. Here we explain them in some detail. The full explanation can be found in Supplementary Material. We ran the code on a Intel(R) Core(TM) i5-8300H CPU running at 2.30GHz.

\subsubsection{Reducing the number of parameters} \label{sss: Red} We reduce the number of parameters involved in the problem of fitting data to $\mathcal{M}_\mathcal{C}^L$, respectively $\mathcal{L}_\mathcal{C}^X$, by translating it to and solving it for $\mathcal{M}_{\widetilde{\mathcal{C}}}^{1,1}$, respectively $\mathcal{M}_{\widehat{\mathcal{C}}}^{2,1}$. Details are found in Supplementary Material Section B. This translation corresponds to a reduction of parameters because an affine patch of $\mathcal{M}_{\widetilde{\mathcal{C}}}^{1,1}$ lives in $(\RR^1)^m$, while an affine patch of $\mathcal{M}_{\mathcal{C}}^{L}$ lives in $(\RR^2)^m$. The analogous is true for $\mathcal{M}_{\widehat{\mathcal{C}}}^{2,1}$ and~$\mathcal{L}_{\mathcal{C}}^X$. 

We call the methods that do \textit{not} use this translation \textit{standard} and for instance, write (L1).1 std. to denote it. By just writing (L1).1, we denote the implementation of this translation.

\subsubsection{Evaluating error} 

In our experiments, each iteration starts by randomly generating a 3D line $L$ and $p$ points $X_i$ on this line. This line and these points are projected by randomly generated cameras to line and point correspondences in $(\RR^2)^m$. For fixed~$\epsilon =10^{-12}$, randomly generated noise vectors $\sigma(\epsilon)$ of length $\epsilon$ are added in each factor. Our approaches then find points $Y_i\in \RR^3$ that match the noisy correspondences. 
We measure the accuracy by taking the logarithm after averaging the relative error of reconstructed points:
\begin{equation}
e= \log_{10}\left(\frac{\sum \|Y_i-X_i\|}{p\epsilon}\right),
\end{equation}
where $\|Y_i-X_i\|$ denotes the Euclidean distance. The interpretation of this number is that the error in the data gets amplified by $10^e$ during the triangulation. 

\subsubsection{Algorithms} 

The pseudocodes for all approaches are included in the Supplementary Material. The pseudocode for approach (L1).1 is presented in~\Cref{alg:L1.1}. We use the notation that for a column vector $X\in \RR^n$, $[X; \; 1]\in \RR^{n+1}$ is the vector we get by adding a $1$ as the last coordinate. Let $\iota$ be the function that scales a vector such that its last coordinate is 1, and then removes that coordinate. In lines 1 and 2 of the algorithm, we generate noisy data by introducing randomly generated noise $\sigma(\epsilon)$ as explained above, where $\epsilon=10^{-12}$ is a fixed parameter for our experiments. Lines 3 and 4 correspond to the triangulation of the line and the point correspondences using the anchored point multiview variety at the line $L_0$. Note that we solve the closest point problem in line 4 by computing the zeros of a system of polynomial (critical) equations using the \textit{solve} function in \texttt{HomotopyContinuation.jl} \cite{HC.jl}. Finally, line 5 compares the points obtained in the previous step with the original starting points, measuring the logarithmic average relative error of the $p$ points. 

Most other pseudocodes are similar to~\Cref{alg:L1.1}. For instance, approach (L1).0 does not take into account the line, so it corresponds deleting lines 1 and 3 from~\Cref{alg:L1.1} and modifying
the minimization domain in line 4, replacing $L_0$ by $\mathcal{M}_{\mathcal{C}}$. In approach (L1).2, line 3 is modified so that $L_0$ is obtained by the span of two triangulated points instead of by intersecting two back-projected planes.

 \begin{algorithm}
\SetKwInOut{Input}{Input}\SetKwInOut{Output}{Output}
\SetKwInOut{Return}{Return}
\caption{One iteration of the (L1).1 std. method given a randomly generated camera arrangement $\mathcal{C}$ of $3\times 4$ matrices, a projective line $L$ spanned by two vectors of $\RR^4$, and $p$ points $X_i\in \RR^3$ such that $[X_i;\; 1]$ lie on $L$.}
\label{alg:L1.1}
\Input{$\mathcal{C}=(C_1,\ldots,C_m)$, $L$, $X_1,\ldots,X_p$}
\Output{The log of the average relative error} 
  \For{$j$ \textnormal{from $1$ to $m$}}{
    \For{$i$ \textnormal{from $1$ to $p$}}{
        $q_{i,j} \gets \iota(C_j[X_i;\;1]) +\sigma(\epsilon$)\;}
        $u_j \gets \iota(C_j \cdot L) +\sigma(\epsilon)$\;}
    $L_0\gets \mathrm{nullspace} \begin{bmatrix}
    C^T_1[u_1;\;1]&C^T_2[u_2;\;1]
    \end{bmatrix}^T$\;
    \For{$i$ from 1 to $p$}{$Y_i \gets \underset{{X\in \RR^3: [X;\;1]\in L_0}}{\mathrm{argmin}} \sum^{m}_{j=1} (q_{i,j}-\iota(C_j[X;\;1]))^2$\; }
    $e\gets \log_{10}\left(\frac{1}{p\epsilon}\sum_{i=1}^p \|Y_i-X_i\|\right)  $\;

\Return{$e$}

\end{algorithm}

\subsection{Results}

The main results of our numerical experiments are presented in \Cref{fig: m=2,fig: m=3,fig: m=4} and~\Cref{tab: m=2,tab: m=3,tab: m=4}. We compare the performance of the five different triangulation approaches for $p=5$ and different number of $m$ cameras. In the tables, we present the median, mean and standard deviation~$\sigma$ of the logarithmic average relative error and time. The results are displayed in histograms and tables created with \texttt{Plots.jl}~\cite{plots}. 

Note that for two cameras, (L1).0 and (L1).2 are the exact same, and (L1).1 and (L1).4 are essentially the same. In \Cref{fig: m=2} and \Cref{tab: m=2} we present results for $m=2$ and therefore only include (L1).0, (L1).1 and (L1).3. We also compare with the standard implementations of (L1).1 and (L1).3 that do not use the linear isomorphisms of \Cref{thm: anchoredmulti-isomorphism} to improve computation time as described in \Cref{sss: Red}. This simulation is iterated 1000 times. 

 \Cref{fig: m=3} and \Cref{tab: m=3} compare the different reconstruction approaches for $m=3$ cameras. The simulations are again iterated 1000 times.

\Cref{fig: m=4} and \Cref{tab: m=4} compare the different reconstruction approaches for $m=4$ cameras. The simulations are iterated only 100 times, due to the longer run-time of the simulations.

Our theoretical studies in \Cref{s: AMV}, specifically~\Cref{thm: EDDthm}, guarantee that methods such as (L1).1, (L1).2 and (L1).3 are less complex from the algebraic point of view, than (L1).0 for triangulation of points incident to a line. Additionally, using \Cref{sss: Red}, we were able to improve the computation speed while retaining comparable accuracy, which is exemplified by the two cases (L1).1 vs. (L1).1 std. and (L1).3 vs. (L1).3 std. in \Cref{tab: m=2}.

Finally, our numerical simulations show that in our \texttt{HomotopyContinuation.jl} implementation, the (L1).1 approach is the fastest across all number of views, but least accurate. In the case of $m=3$, (L1).4 is the most accurate and faster than the traditional (L1).0 method. Even for $m=4$, (L1).4 is the most accurate but is notably slower than (L1).0. Increasing the number of cameras $m$ improves accuracy and reduces speed, but the exact impact depends on the approach. Among (L1).1-4, (L1).1 is least affected in both regards, and (L1).4 is most affected in both regards. The speed can be thought of as an affine function in the number of point correspondences $p$. Reconstructing the line correspondence takes some fixed amount of time independent of $p$ and then all $p$ correspondences are independently reconstructed.

\begin{figure}
\begin{center}
    \includegraphics[width = 0.45\textwidth]{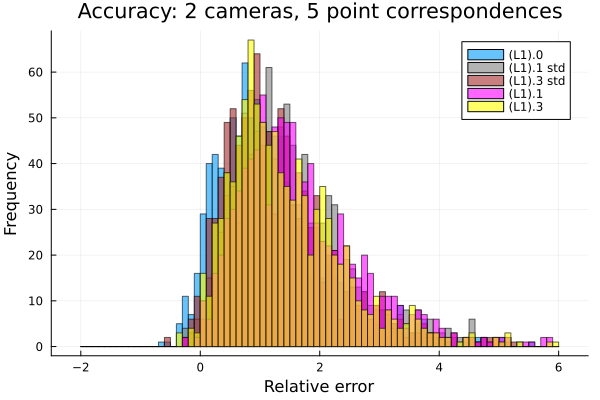}
    \includegraphics[width = 0.45\textwidth]{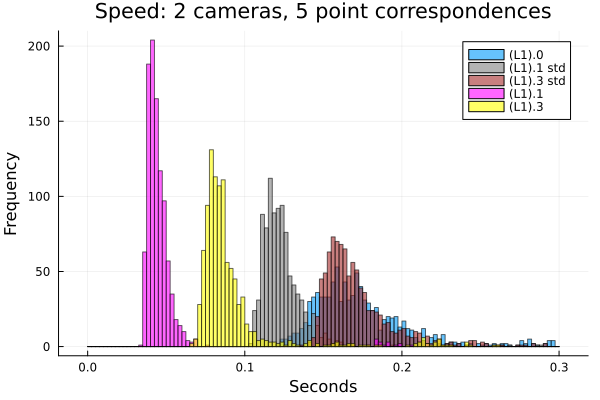}
    \caption{Triangulation of $p=5$ point correspondences incident to a line for $m=2$ views with complete visibility. 1000 iterations. The histograms show the frequency of the average relative error and the running time of the depicted triangulation methods. }
    \label{fig: m=2}
\end{center}
\end{figure}

\begin{table}
\begin{center}
\begin{tabular}{|l|c|c|c|l|} 
\hline
Accuracy & median & mean & $\sigma$\\
\hline\hline
(L1).0 & 1.031 & 1.243 & 1.213 \\
(L1).1 & 1.489 & 1.702 & 1.085 \\
(L1).1 std. & 1.374 & 1.548 & 0.990\\
(L1).3 & 1.250 & 1.449 & 0.964 \\
(L1).3 std. & 1.147 & 1.393 & 1.205 \\
\hline\hline
Speed & median & mean & $\sigma$\\
\hline\hline
(L1).0 & 0.170 & 0.201 & 0.220 \\
(L1).1 & 0.0430 & 0.0470 & 0.0207 \\
(L1).1 std. & 0.122 & 0.141 & 0.202\\
(L1).3 & 0.0838 & 0.0941 & 0.0464 \\
(L1).3 std. & 0.167 & 0.188 & 0.0764 \\
\hline
\end{tabular}
\end{center}
\caption{Triangulation of $p=5$ point correspondences incident to a line for $m=2$ views with complete visibility. 1000 iterations. The tables show the accuracy and speed of the depicted triangulation methods in terms of median, mean and standard deviation. }
\label{tab: m=2}
\end{table}

\begin{figure}
\begin{center}
    \includegraphics[width = 0.45\textwidth]{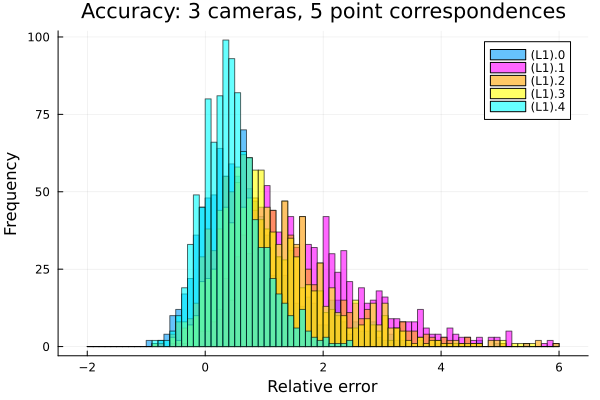}
    \includegraphics[width = 0.45\textwidth]{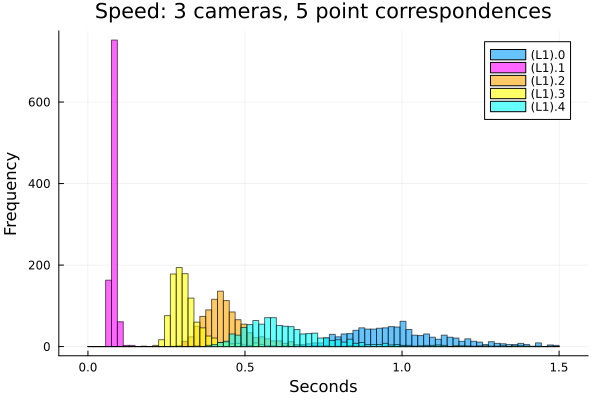}
    \caption{Triangulation of $p=5$ point correspondences incident to a line for $m=3$ views with complete visibility. 1000 iterations. The histograms show the frequency of the average relative error and the running time of the depicted triangulation methods. }
    \label{fig: m=3}
\end{center}
\end{figure}

\begin{table}
\begin{center}
\begin{tabular}{|l|c|c|c|l|} 
\hline
Accuracy & median & mean & $\sigma$\\
\hline\hline
(L1).0 & 0.620 & 0.817 & 1.023 \\
(L1).1 & 1.562 & 1.781 & 1.128 \\
(L1).2 & 0.878 & 1.125 & 0.998\\
(L1).3 & 1.011 & 1.243 & 1.078 \\
(L1).4 & 0.407 & 0.499 & 0.901 \\
\hline\hline
Speed & median & mean & $\sigma$\\
\hline\hline
(L1).0 & 0.984 & 1.060 & 0.514 \\
(L1).1 & 0.0806 & 0.0858 & 0.0318 \\
(L1).2 & 0.431 & 0.456 & 0.155\\
(L1).3 & 0.304 & 0.331 & 0.134 \\
(L1).4 & 0.616 & 0.763 & 0.500 \\
\hline
\end{tabular}
\end{center}
\caption{Triangulation of $p=5$ point correspondences incident to a line for $m=3$ views with complete visibility. 1000 iterations. The tables show the accuracy and speed of the depicted triangulation methods in terms of median, mean and standard deviation. }
\label{tab: m=3}
\end{table}

\begin{figure}
\begin{center}
    \includegraphics[width = 0.45\textwidth]{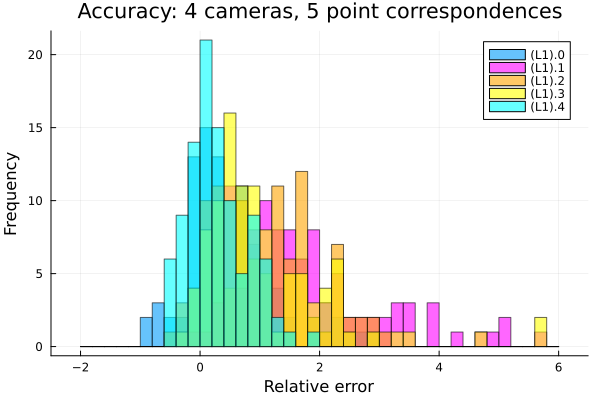}
    \includegraphics[width = 0.45\textwidth]{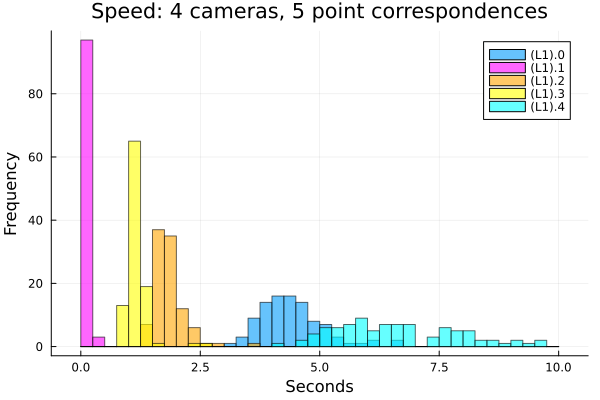}
    \caption{Triangulation of $p=5$ point correspondences incident to a line for $m=4$ views with complete visibility. 100 iterations. The histograms show the frequency of the average relative error and the running time of the depicted triangulation methods.}
    \label{fig: m=4}
\end{center}
\end{figure}

\begin{table}
\begin{center}
\begin{tabular}{|l|c|c|c|l|} 
\hline
Accuracy & median & mean & $\sigma$\\
\hline\hline
(L1).0 & 0.385 & 0.551 & 0.776 \\
(L1).1 & 1.437 & 1.762 & 1.249 \\
(L1).2 & 1.095 & 1.077 & 0.845\\
(L1).3 & 0.817 & 1.110 & 1.094 \\
(L1).4 & 0.203 & 0.421 & 1.324 \\
\hline\hline
Speed & median & mean & $\sigma$\\
\hline\hline
(L1).0 & 4.365 & 4.675 & 1.576 \\
(L1).1 & 0.141 & 0.149 & 0.0431 \\
(L1).2 & 1.809 & 1.863 & 0.323\\
(L1).3 & 1.107 & 1.157 & 0.234 \\
(L1).4 & 6.599 & 7.391 & 3.068 \\
\hline
\end{tabular}
\end{center}
\caption{Triangulation of $p=5$ point correspondences incident to a line for $m=4$ views with complete visibility. 100 iterations. The tables show the accuracy and speed of the depicted triangulation methods in terms of median, mean and standard deviation. }
\label{tab: m=4}
\end{table}

\section{Conclusion and Future Work}

This work studied the triangulation of incident points and lines. We introduced the anchored line multiview varieties and showed that they correspond to multiview varieties arising from projections $\PP^2\dashrightarrow\PP^1$ and $\PP^1\dashrightarrow\PP^1$. In addition, we proved that they are less complex for reconstruction via critical points. These theoretical results are also aligned with the numerical experiments we conducted. 
In particular, the proposed methods in~\Cref{s: NUM} compare different methods for triangulating a set of points incident to a line. We highlight that according to our experiments, the use of the theoretical results in~\Cref{s: AMV} allows for a notably faster triangulation while preserving the accuracy of the traditional method for point reconstruction. 

We hope this work motivates the use of new algebraic constraints in the triangulation of incident points and lines. Some questions that arose in the development of this work, and that we believe are worth studying, are included below.
\begin{itemize}
    \item A triangulation approach inspired by \cite{BARTOLI2005416} consists of reconstructing the 3D line $L$ by fitting it to $ \mathcal L_{\mathcal C}$ solving the following optimization problem 
    \begin{align}
        \mathrm{minimize}\quad \sum_{ij}d(x_{ij},\ell_i)^2 \textnormal{subject to} \quad \ell\in \mathcal{L}_\mathcal{C}.
    \end{align}
    Here, $x_{ij}$ denotes the $j=1,\ldots,p$ point correspondences across $i=1,\ldots,m$ cameras, and $d(x_{ij},\ell_i)$ is the affine distance from a line in $\RR^2$ to the point, meaning
\begin{align}
d(x_{ij},\ell_i)=\frac{|1+(\ell_{i})_1(x_{ij})_1+(\ell_{i})_2(x_{ij})_2|}{\sqrt{(\ell_{i})_1^2+(\ell_{i})_2^2}}.
\end{align} We believe that this method is more accurate than (L1).4, but with longer running time, due to the appearance of fractions.

\item Consider the point-line problem (P1) defined by one point incident to $l$ lines, illustrated in~\Cref{fig:L1_and_P1}. Can similar approaches to (L1).1-4 be formulated for the triangulation of the (P1) setting? Can they be used to improve accuracy and/or the speed of triangulation?

\item Our implementation for the numerical experiments does not directly correspond to the specialized software and hardware used for triangulation in practice. However, our results motivate that patterns displayed in our figures and tables could translate to such settings. This could improve the efficiency of current specialized solvers. 
\end{itemize}

{\small
\bibliographystyle{ieeetr}
\bibliography{VisionBib}
}
\newpage

\appendix

\section*{Appendix}  In this Supplementary Material, we prove all the mathematical results from the main body of the paper. For convenience of the
reader, we start in \Cref{s:AlgPre} by explaining the elementary notions of algebraic geometry, that are helpful to understand the rest of the Supplementary Material. Results that appear in the main body
of the paper are restated and given the same numbering. Additional results not stated in the main body are numbered independently.

\Cref{s: AMV App} deals with \Cref{s: AMV} apart from the Euclidean distance degree. In \Cref{s: EDDprelim} we provide helpful background for the EDD calculations that are carried out in \Cref{s: EDD}. In \Cref{s: Pseudo} we provide pseudocode for the different reconstruction approaches from \Cref{s: NUM}.


\section{Algebraic Geometry Preliminaries}\label{s:AlgPre} 
The \textit{complex projective space} of dimension $n$ is the set of one-dimensional linear subspaces of $\CC^{n+1}$, equivalently 
$\PP^n\coloneqq (\CC^{n+1}\setminus \{0\})/\sim $, where $\sim$ denotes the equivalence relation defined by
\begin{equation}
    x\sim y \Leftrightarrow x=\lambda y \quad\mbox{ for some }\quad 0\neq\lambda\in\CC.
\end{equation}

The ring of polynomials in $n+1$ variables is denoted by $R\coloneqq \CC\left[x_0,\ldots ,x_n\right]$. A subset $X\subseteq \PP^n$  is called a \textit{projective algebraic variety}, when there exists a collection $\{f_1,\ldots ,f_k\}$ of homogeneous polynomials such that $X=\{x\in\PP^n \mid f_1(x)=\cdots f_k(x)=0\}$. In other words, $X$ is the vanishing set of the polynomials $f_i$ for $i=1,\ldots ,k$ such  that each term of the polynomial has degree $d$ and $f_i(\lambda x_0,\ldots ,\lambda x_n)=\lambda^df_i(x_0,\ldots ,x_n)$. Similarly, a subset $X\subseteq \PP^{n_1}\times \cdots \times \PP^{n_m}$ is an algebraic variety, if $X$ is the vanishing set of multi-homogeneous polynomials $\{f_1,\ldots ,f_k\}$.  

The Zariski topology on $\PP^n$ (or $\PP^{n_1}\times \cdots \times \PP^{n_m}$) is the topology whose closed sets are algebraic varieties. Therefore, given a set $U$, its Zariski closure, denoted $\overline{U}$, is the smallest variety containing $U$.

Let $X\subseteq \PP^n$ and $Y\subseteq \PP^m$ be projective varieties. A map $\varphi:X\longrightarrow Y$ is regular if it can be written as 
    \begin{equation}
        \varphi(x)=\left[\varphi_0(x) : \cdots :\varphi_m(x) \right]
    \end{equation}
for some polynomials $\varphi_0, \ldots ,\varphi_m$ that do not vanish simultaneously. If there is a regular map $\psi: Y\longrightarrow X$ such that $\varphi\circ\psi ={\rm Id}_Y$ and $\psi\circ \varphi ={\rm Id}_X$, we say that $X$ and $Y$ are \textit{isomorphic}, and we denote it by $X\cong Y$. If $U\subseteq X$ is a Zariski dense open set and $\varphi:U\longrightarrow Y$ is a regular map, we say that $\varphi$ is a \textit{rational map} from $X$ to $Y$, and denote it by $\varphi: X\dashrightarrow Y$. 
See \cite[Section 1]{breiding2022line} for background on the basic properties of rational maps that are used in this section.

Given a variety $X$, we define its \textit{ideal} as the set
\begin{equation}
    I(X)=\{f\in R \mid f(x)=0 \mbox{ for every } x\in X\}
\end{equation}
of homogeneous polynomials that vanish in every element of $X$. For every ideal $I$, it is possible to find a (not necessarily unique) finite set of polynomials $\{f_1,\ldots,f_k\}\subseteq I$, such that every element $f\in I$ can be written as
\begin{equation}
    f(x)=g_1(x)f_1(x)+\cdots +g_k(x)f_k(x),
\end{equation}
for some polynomials $g_i(x)\in R$. In this scenario, we say that $I$ is \textit{generated} by $\{f_1,\ldots ,f_k\}$, and this is denoted as $I=\langle f_1,\ldots ,f_k\rangle$. 

Given a variety $X$ and its ideal $I(X)=\langle f_1,\ldots ,f_k\rangle$, we say that a point $a\in X$ is \textit{smooth} if the rank of the Jacobian matrix $J(a)\coloneqq \left[\frac{\partial f_i}{\partial x_i} (a)\right]$ is equal to the codimension of $X$. This definition is independent of the choice of generators of $I(X)$. For a broader description and results on the smoothness of algebraic varieties, we refer the reader to~\cite{Gathmann}.  

We use the notation $\vee$ to denote the \textit{join} of two vectors spaces, meaning $U\vee V=\mathrm{span}\{U,V\}$. Similarly, $\wedge$ denotes the intersection of linear spaces.


\section{Anchored Multiview Varieties} \label{s: AMV App}

For a camera matrix $C:\PP^3\to\PP^2$, the \textit{back-projected line} of $x\in \PP^2$ is the line in $\PP^3$ that contains all points that are by $C$ projected onto $x$. Similarly, for an image line $\ell\in \Gr(1,\PP^2)$, its back-projected plane is the plane in $\PP^3$ containing all lines that are by $C$ projected onto $\ell$. Under the identification $\Gr(1,\PP^2)\cong (\PP^2)^\vee\cong \PP^2$ we describe the \textit{back-projected plane} of $\ell$ by its defining linear equation $C^T\ell$. We may parameterize lines in ${\rm Gr}(1,\PP^3)$ by two points spanning it.

Throughout this work, we assume that any camera arrangement has at least one camera and all centers are pairwise disjoint. 

\subsection{The linear isomorphisms}

Consider an arrangement $\widetilde{\mathcal{C}}$ of full-rank $2\times 2$ matrices, and an arrangement $\widehat{\mathcal{C}}$ of full-rank $2\times 3$ matrices. We define $\mathcal{M}_{\widetilde{\mathcal{C}}}^{1,1}$, and $\mathcal{M}_{\widehat{\mathcal{C}}}^{2,1}$, respectively as the Zariski closure of the image of the joint maps
    \begin{equation}
        \begin{split}
            \Phi_{\widetilde{\mathcal C}}: \PP^1&\longrightarrow (\PP^1)^m, \\
            X & \longmapsto (\widetilde C_1X,\ldots,\widetilde C_mX)
        \end{split}
    \end{equation}
and 
    \begin{equation}
        \begin{split}
            \Phi_{\widehat{\mathcal C}}:\PP^2&\dashrightarrow (\PP^1)^m,\\
            X& \longmapsto (\widehat{C}_1X,\ldots \widehat{C}_mX).
        \end{split} 
    \end{equation}

\begin{customtheorem}{\ref{thm: anchoredmulti-isomorphism}}  $ $ 

\begin{enumerate}
     \item Let $\phi_L:L \to \PP^1$ and $\psi_{\mathcal C,i}:C_i\cdot L\to \PP^1$ be any choices of linear isomorphisms. Let $\widetilde{\mathcal{C}}$ denote the arrangement of matrices $\widetilde{C}_i:=\psi_{\mathcal{C},i} \circ C_i\circ \phi_L^{-1} $. Then 
    \begin{align}
        \psi_{\mathcal{C},L}:=(\psi_{\mathcal C,1},\ldots,\psi_{\mathcal C,m}):\mathcal{M}_\mathcal{C}^L\to  \mathcal{M}_{\widetilde{\mathcal{C}}}^{1,1}
    \end{align}
    is a linear isomorphism.
    \item Let $\phi_X: \Lambda(X)\to \PP^2$ and $\psi_{\mathcal C,i}: \Lambda(C_i X)\to \PP^1$ be any choices of linear isomorphisms. Let $\widehat{\mathcal{C}}$ denote the arrangement of matrices $\widehat{C}_i:=\psi_{\mathcal C,i} \circ C_i\circ \phi_X^{-1} $. Then 
    \begin{align}
        \psi_{\mathcal{C},X}:=(\psi_{\mathcal C,1},\ldots,\psi_{\mathcal C,m}): \mathcal{L}_\mathcal{C}^X\to \mathcal{M}_{\widehat{\mathcal{C}}}^{2,1}
    \end{align}
    is a linear isomorphism.
\end{enumerate}
\end{customtheorem}
We interpret $C_i\circ \phi_X^{-1}:\PP^2\dashrightarrow\PP^2$ as a matrix as follows. Let $H$ be a plane in $\PP^3$ disjoint from $X$. Then the following map $\phi_X^{-1}:\PP^2\to \Lambda(X)$ is defined by a linear mapping $f_X:\PP^2\to H$ such that $\phi_X^{-1}(Y)=\mathrm{span}\{X,f_X(Y)\}$. As a matrix, $C_i\circ \phi_X^{-1}$ is equal to $[C_iX]_\times C_i f_X$, where 
\begin{align}
    [a]_\times:=\begin{bmatrix}0 & -a_3 & a_2 \\
    a_3 & 0 & -a_1 \\
    -a_2 & a_1 & 0\end{bmatrix}.
\end{align}

We often work with Zariski closures of images of rational maps. By Chevalley's theorem \cite[Theorem 4.19]{michalek2021invitation}, we may equivalently take Euclidean closures. With this in mind, we can use the following lemma.

\begin{lemma}\label{le: ClosIsomo} Let $\psi:\mathcal X \to\mathcal Y$ be an isomorphism and $U\subseteq \mathcal X,V\subseteq \mathcal Y$ sets whose Euclidean closures equals their Zariski closures. If $\psi(U)=V$, then $\psi(\overline{U})=\overline{V}$.
\end{lemma}

\begin{proof} Take a point $v\in \overline{V}\setminus V$. Then there is a sequence $V\ni v^{(n)}\to v$ in Euclidean topology such that $u^{(n)}=\psi^{-1}(v^{(n)})\in U$ converges in Euclidean topology by continuity of $\psi$ to a point $u\in \overline{U}$ for which $\psi(u)=v$. We have shown $\overline{V}\subseteq \psi(\overline{U})$. Similarly we show $\overline{U}\subseteq \psi^{-1}(\overline{V})$ from which it follows that $\psi(\overline{U})\subseteq \overline{V}$.
\end{proof}

\begin{proof}[Proof of \Cref{thm: anchoredmulti-isomorphism}] $ $

$\textit{1}.$ It is worth noting that $\Phi_\mathcal{C}|_L$ is well-defined everywhere, as $L$ does not contain any center. Additionally, $\Phi_{\widetilde{\mathcal C}}$ is defined everywhere. In particular, the images of both maps are Zariski closed.

By construction, $\psi_{\mathcal C,L}(\Phi_{\mathcal C}|L(X))=\Phi_{\widetilde{\mathcal C}}(\phi_L(X))$, which shows that $\psi_{\mathcal C,L}$ is a well-defined map.

Take a point $x\in\mathcal{M}_{\widetilde{\mathcal{C}}}^{1,1}$, then there is a point $X\in \PP^1$ such that $x_i=\widetilde{C}_iX$ for each $i$. Consider $x'\in \mathcal{M}_{\mathcal{C}}^L$, the image of $X'=\phi_L^{-1}(X)$ such that $x_i'=C_iX'$. By construction, $x$ is the image of $x'$ under $\psi_{\mathcal C,L}$, which shows surjectivity.  

For injectivity, assume that $\psi_{\mathcal C,L}(X)=\psi_{\mathcal C,L}(X')$. Then for each $i$, $C_iX=C_iX'$. However, since the line $L$ does not meet any of the centers, the back-projected lines must meet in exactly one point inside $L$, meaning that $X=X'$.

$\textit{2}.$ As we wish to use \Cref{le: ClosIsomo}, we let 
\begin{align}
    \mathcal X=\Lambda(C_1X)\times \cdots\times \Lambda(C_mX),\quad \mathcal Y=(\PP^1)^m. 
\end{align}
Note that $\psi_{\mathcal C,X}:\mathcal X\to \mathcal Y$ is an isomorphism by construction.
Further, let $U$ be the image of $\Upsilon_\mathcal{C}|_{\Lambda(X)}$, and $V=\mathrm{Im}\;\Phi_{\widehat{\mathcal C}}$. One can show $\psi_{\mathcal{C},X}(U)=V$ via similar calculations to \textit{1}.  
\end{proof}

\indent
\subsection{Irreducibility, dimension, and equations} 

In the main body of the paper it was claimed that the anchored multiview varieties under natural conditions equal,

\begin{align}
    \mathcal{X}_\mathcal{C}^L&=\{(x_1,\ldots,x_m)\in \mathcal{M}_{\mathcal{C}}: x_i\in C_i\cdot L\},\label{eq:defAnc1}\\
    \mathcal{Y}_\mathcal{C}^X&=\{(\ell_1,\ldots,\ell_m)\in \mathcal{L}_{\mathcal{C}}: C_iX\in \ell_i\}.\label{eq:defAnc2}
\end{align}

This provides an alternative characterization to the closure of the images of restrictions of $\Phi_\mathcal{C}$ and $\Upsilon_\mathcal{C}$, which is a useful fact that we formalize in the following lemma. 

\begin{customproposition}{\ref{prop: first-eq}} Consider an arrangement of $m$ cameras $\mathcal{C}=(C_1,\ldots,C_m)$, a point $X\in\PP^3$ and a line $L$ in $\mathbb P^3$ satisfying the conditions of~\Cref{def:AMV}. 
    \begin{enumerate}
        \item If there are two different camera centers $c_i$ and $c_j$ such that the span of $\{c_i,c_j,L\}$ is $\PP^3$, then 
        \begin{equation}
            \mathcal{M}_\mathcal{C}^L=\{(x_1,\ldots,x_m)\in \mathcal{M}_{\mathcal{C}}: x_i\in C_i\cdot L\}.
        \end{equation}

        \item If for each camera center $c_i$, the line spanned by $c_i$ and $X$ does not contain any other camera center, then 
        \begin{equation}
            \mathcal{L}_\mathcal{C}^X=\{(\ell_1,\ldots,\ell_m)\in \mathcal{L}_{\mathcal{C}}: C_iX\in \ell_i\}.
        \end{equation}
    \end{enumerate}
\end{customproposition}

\begin{proof} $ $

$\textit{1}.$ We recall the assumption that $L$ contains no center. Then $\Phi_\mathcal{C}|_L$ is defined everywhere and the image of this map is closed. Let $x\in \mathrm{Im}\Phi_\mathcal{C}|_L$. There is an $X\in L$ such that $x=\Phi_\mathcal{C}(X)$. Therefore $x\in \mathcal{M}_\mathcal{C}$ and $x_i\in C_i\cdot L$. Conversely, if $x\in \mathcal{M}_\mathcal{C}$ and $x_i\in C_i\cdot L$, then since the back-projected line of $x_i$ meet $L$ in unique points $X_i\in L$, we just have to argue that $X_i$ are all the same. This is trivial if there is only one camera. If there are two centers $c_i,c_j$ that together with $L$ span $\PP^3$, then the planes $c_i\vee L$ and $c_j\vee L$ meet in exactly the line $L$. Then the back-projected lines of $x_i,x_j$ must meet inside $L$, implying $X_i=X_j$. For any other center $c_k$, we either have that $c_i,c_k$ and $L$ span $\PP^3$ or $c_j,c_k$ and $L$ span $\PP^3$. This either implies $X_i=X_k$ or $X_j=X_k$ by the above. Either way, repeating this process shows that all $X_i$ are equal and $x=\Phi_\mathcal{C}(X)$ for $X=X_i$.  

$\textit{2}.$ For any line $L\in \Lambda(X)$ that does not meet any center, it is clear that $\ell=\Upsilon_\mathcal{C}(L)$ satisfies $\ell\in \mathcal{L}_\mathcal{C}$ and $C_iX\in \ell_i$. Therefore $\mathcal{L}_\mathcal{C}^X\subseteq  \mathcal{Y}_\mathcal{C}^X$. For the other inclusion, we take an element $\ell\in \mathcal{Y}_\mathcal{C}^X$. If the intersection of the back-projected planes $H_i$ of $\ell_i$ contain a line $L$ through $X$ meeting no center, then $\ell=\Upsilon_{\mathcal{C}}|_{\Lambda(X)}(L)$. This especially happens when $H_i$ intersect in a plane. Note that if the intersection contains a line $L$ that doesn't meet $X$, then the intersection contains the plane $X\vee L$. We are left to check what happens if $H_i$ intersect in exactly a line $L$ that meets a center, say $c_i$. By assumption, no other center is contained in this line. Therefore $H_j,j\neq i$ is equal to $c_j\vee L$. Let $L^{(n)}\in \Lambda(X)$ be any sequence of lines in $H_i$ meeting no centers and such that $L^{(n)}\to L$. It is clear that $c_j\vee L^{(n)}\to H_j$ for $j\neq i$ $n\to \infty$ and $c_i\vee L^{(n)}= H_i$ for each $n$. Then $\Upsilon_\mathcal{C}(L^{(n)})\to \ell$, showing $\ell\in \mathcal{L}_\mathcal{C}^X$ and we are done.  \end{proof}

If the assumptions of \Cref{prop: first-eq} do not hold, then the result doesn't either. In the first statement, let $c_1,c_2$ be centers that together with $L$ span a plane $P$. Given any point $X\in P\setminus \{c_1,c_2\}$, the element $x=(C_1 X, C_2X)$ satisfies that $x\in \mathcal{M}_\mathcal{C}$ and $x_i\in C_i\cdot L$. However, generally for a point $X\in P\setminus L$, we have $x\not\in \mathrm{Im}\Phi_\mathcal{C}|_L$. For the second statement, consider two centers $c_1,c_2$ that together with $X$ span a line $L$. Consider two distinct planes $H_1, H_2$, both containing the line $L$. They meet therefore exactly in $L$ and the pair of corresponding image lines $(\ell_1,\ell_2)$ lies in $ \mathcal{Y}_\mathcal{C}^X$ for the camera arrangement given by these two cameras. However, in the image of $\Upsilon_\mathcal{C}|_{\Lambda(X)}$, the back-projected planes $H_1$ and $H_2$ are always the same.

\begin{customproposition}{\ref{prop: dim}} $\mathcal{M}_\mathcal{C}^L$ and $\mathcal{L}_\mathcal{C}^X$ are irreducible. Further, 
\begin{enumerate}
    \item $\mathcal{M}_\mathcal{C}^L$ is isomorphic to $\PP^1$. In particular, $\dim\mathcal{M}_\mathcal{C}^L=1$.
    \item If the span of the centers $c_i$ and the point $X$ are not collinear, then $\dim \mathcal{L}_\mathcal{C}^X=2$. 
\end{enumerate}
\end{customproposition}

\begin{proof} Both varieties are irreducible since the image of any rational map from an irreducible variety is irreducible.

$\textit{1}.$ Since we assume no center lies in $L$, $\Phi_\mathcal{C}$ restricted to $L$ is defined everywhere, and therefore the image of this restriction equals $\mathcal{M}_\mathcal{C}^L$. This map is further injective since if $x\in \mathcal{M}_\mathcal{C}^L$, then the back-projected line of $x_i\in \PP^2$ meets $L$ in exactly a point $X$, which implies that $X$ is the only point on $L$ for which $x=\Phi_\mathcal{C}(X)$. 

$\textit{2}.$ Note that since $\dim \Lambda(X)=2$, we have $ \dim\mathcal{L}_\mathcal{C}^X\le 2$. Let $U\subseteq \Lambda(X)$ be the subset of lines that meets no center. Without restriction, assume $c_1,c_2$, and $X$ span a plane. Each line $L\in U$ uniquely defines two planes via $c_1\vee L,c_2\vee L$. Since $\dim \Lambda(X)=2$, projection of $ \mathcal{L}_\mathcal{C}^X$ onto the factors of $c_1,c_2$ is at least two dimensional, showing the other inequality $ \dim\mathcal{L}_\mathcal{C}^X\ge 2$. 
\end{proof}

For the result below, let $F^{ij}$ denote the fundamental matrix of $C_i$ and $C_j$, see \cite{Hartley2004,trager2015joint}.

\begin{customproposition}{\ref{prop: eq}}  For a point $X\in \PP^3$ and line $L$ in $\PP^3$, let $\mathcal{C}$ be a generic (random) camera arrangement of $m$ cameras. 
\begin{enumerate}
    \item $x\in \mathcal{M}_\mathcal{C}^L$ if and only if $x_1^TF^{1j}x_j=0$ for every $j=2,\ldots,m$ and $x_i^TC_i\cdot L=0$ for every $i=1,\ldots,m$.
    \item $\ell\in \mathcal{L}_\mathcal{C}^X$ if and only if
    \begin{align}
    \begin{split}
        &\det \begin{bmatrix}
    C_1^T \ell_1 & C_2^T \ell_2 & C_i^T\ell_i 
    \end{bmatrix}=0,\\  
    &\det \begin{bmatrix}
    C_1^T \ell_1 & C_3^T \ell_3 & C_i^T\ell_i 
    \end{bmatrix}=0
    \end{split}
    \end{align}
    for $i=3,\ldots,m$ and $\ell_i^T C_iX=0$ for every $i=1,\ldots,m$.
\end{enumerate}
\end{customproposition}

\begin{proof} Note that in the generic case, the conditions of \Cref{prop: first-eq} hold. 

$\textit{1}.$ Recall that $x_i^TC_i\cdot L=0$ is equivalent to $x_i\in C_i\cdot L$. As in the proof of \Cref{prop: dim}, $x\in \mathcal{M}_\mathcal{C}^L$ is uniquely determined by the intersection $X\in L$ of its back-projected lines. The back-projected lines $L_i$ of $x_i$ intersect if and only if the pairs $(L_1,L_i)$ intersect for $i\ge 2$, which in turn is equivalent to $x_1^TF^{1i}x_i=0$ for the fundamental matrix $F^{1i}$. 
 
$\textit{2}.$ Recall that $\ell_i^T C_iX=0$ is equivalent to $C_iX\in \ell_i$. By \cite[Theorem 2.5]{breiding2022line}, $\ell\in \mathcal L_{\mathcal C}$ if and only if the back-projected planes meet in a line (assuming generic centers), and  
 \begin{align} \label{eq: ijk}
        \det \begin{bmatrix}
    C_i^T \ell_i & C_j^T \ell_j & C_k^T\ell_k 
    \end{bmatrix}=0,
\end{align}
is equivalent to the back-projected planes of $\ell_i,\ell_j,\ell_k$ meeting in at least a line. By \Cref{prop: first-eq}, we are left to show direction $\Leftarrow$. Let $H_i$ denote the back-projected plane of $\ell_i$. For $m=2$, the two back-projected planes always meet. For $m\ge 3$ we have that $c_1,c_2,c_3$ with $X$ span $\PP^3$ by genericity. Especially, the back-projected planes of $\ell_1,\ell_2,\ell_3$ meet in a line by setting $i=1,j=2,k=3$ in \Cref{eq: ijk}. Also, since $c_1,c_2,c_3, X$ span $\PP^3$, they meet exactly in a line. If $m\ge 4$, it suffices to show that $H_l$ for $l\ge 4$ meets $H_1,H_2,H_3$ in a line. Note that either $H_1, H_2$ or $H_1, H_3$ meet exactly in a line. Let $i,j\in \{1,2,3\}$ denote indices for which this happens. Then for $i,j$ and $k=4$, \Cref{eq: ijk} guarantees that $H_i,H_j,H_4$ meet in exactly a line, which suffices. 
\end{proof}

\subsection{Smoothness and multidegrees} First, similar to what is done in the proof of the smoothness properties of the multiview variety $\mathcal{M}_\mathcal{C}$ in \cite{trager2015joint}, we use that multiview varieties are isomorphic to corresponding varieties of back-projected lines or planes.  

\begin{customproposition}{\ref{prop: smooth}}  $ $

\begin{enumerate}
    \item $\mathcal{M}_\mathcal{C}^L$ smooth.
    \item If there are exactly two cameras, or the centers together with the point $X$ span $\PP^3$, then $\mathcal{L}_\mathcal{C}^X$ is smooth. 
\end{enumerate}
\end{customproposition}

\begin{proof} $ $

$\textit{1}.$ Since $\mathcal{M}_\mathcal{C}^L$ is isomorphic to $\PP^1$ by \Cref{prop: dim}, it is smooth.

$\textit{2}.$ Assume that the line $c_i\vee X$ contains $c_j$ for $j\neq i$. In the image we always have $H_i=H_j$ for the back-projected planes of $\ell_i,\ell_j$. Therefore $\mathcal{L}_\mathcal{C}^X$ is isomorphic to $\mathcal{L}_{\mathcal{C}'}^X$, where $\mathcal{C}'$ is equal to $\mathcal{C}$ after having removed the smallest amount of cameras from $\mathcal{C}$ such that each line $c_i\vee X$ contains exactly one center, namely $c_i$ itself. We, therefore, assume now that $\mathcal{C}$ has this property: $c_i\vee X$ contains only the center $c_i$ for each $i$.

If $m=1$, then one can check that $\mathcal{L}_{\mathcal{C}}^X$ is isomorphic to $\PP^1$ and if $m=2$, that $\mathcal{L}_{\mathcal{C}}^X$ is isomorphic to $\PP^1\times \PP^1$. The latter is for instance because any choice of $\ell_1,\ell_2$, where $C_iX\in \ell_i$ guarantees that the back-projected planes $H_i$ meet in a line containing $X$. Therefore we now assume that there are at least three cameras. 

First, we note that $\Lambda(X)$ is a smooth variety and the lines $c_i\vee X$ are smooth subvarieties. Up to linear transformation, we may assume that $X=(1:0:0:0)$ without loss of generality. Let $a=(a_0:a_1:a_2:a_3)$ be distinct from $X$. Then $(0:0:a_0:0:a_1:a_2)$ are the Plücker coordinates of $a\vee X$. In particular, 
\begin{align}
    \Lambda(X)=\{w\in \PP^5: w_0=w_1=w_3=0\}.
\end{align}
In Plücker coordinates, the line $L=a\vee X$ in coordinates $w$ and the fixed point $b=(b_0:b_1:b_2:b_3)\in \PP^3$ spann the plane:
\begin{align}\label{eq: PlüPlane}
    (0: w_2b_1-w_4b_0: w_2b_2-w_5b_0:w_4b_2-w_5b_1).
\end{align}
The three linear non-zero functions in $w$ in \Cref{eq: PlüPlane} vanishes if and only if $b$ lies in the line $L$. Denote them by $f_{b,1},f_{b,2},f_{b,3}$ and $f_b(L)=(0:f_{b,1}(L):f_{b,2}(L):f_{b,3}(L))$ for a line $L\in \Lambda(X)$. Let $c\neq X$. Since the blow-up of a linear space at a linear space is smooth, then
\begin{align}
   \Gamma_{C}:= \overline{\{(L,f_c(L)):c\vee X\neq L\in \Lambda(X) \}},
\end{align}
is a smooth variety in $\Lambda(X)\times \Gr(1,\PP^3)$. Keep in mind that $f_c(L)=c\vee L$. Next we consider the joint blow-up $\Gamma_\mathcal{C}$ defined as
\begin{align}
\overline{\{(L,f_{c_1}(L),\ldots,f_{c_m}(L)): c_i\vee X\neq L\in \Lambda(X)\}}
\end{align}
in $\Lambda(X)\times \mathrm{Gr}(1,\PP^3)^m$. Take an element $\ell\in \mathcal{L}_\mathcal{C}^X$. By the assumption that there are at least three cameras, and the centers together with the point $X$ span $\PP^3$, we have that the back-projected planes meet in exactly a line $L$ containing $X$. We have also assumed that $L$ contains at most one center. If $c_i\in L$, then fix the index $i$, otherwise choose any index $i$. Consider the natural projection,
\begin{align}
\pi_i:\Gamma_\mathcal{C}\to \Gamma_{C_i}.
\end{align}
Restricting to the set where $L$ meets none of the other centers $c_j,j\neq i$, this map is an isomorphism. Since $\Gamma_{c_i}$ is smooth, that means that any element of $\Gamma_\mathcal{C}$, where $L$ does not meet $c_j,j\neq i$ is smooth. But since $i$ was arbitrary, all of $\Gamma_\mathcal{C}$ is smooth. Finally, since the back-projected planes always meet in exactly a line, the projection onto the last $m$ coordinates
\begin{align}
    \pi:\Gamma_{\mathcal{C}}\to \widetilde{\mathcal{L}}_\mathcal{C}^X,
\end{align}
is an isomorphism and therefore $ \widetilde{\mathcal{L}}_\mathcal{C}^X$ is smooth, but this is the variety of the back-projected planes of $\mathcal{L}_\mathcal{C}^X$. In particular, they are isomorphic, and therefore $\mathcal{L}_\mathcal{C}^X$ is also smooth.
\end{proof}

We denote by~$L_{d}\subseteq \mathbb P^{h}$ a general linear subspace of codimension $d$, meaning dimension $h-d$. The \textit{multidegree} of a variety $\mathcal{X}\subseteq \PP^{h_1}\times \cdots \times\PP^{h_m}$ is the function 
\begin{align}
    D(d_{1},\dots,d_{m}):= \#( \mathcal{X}\cap (L_{d_{1}}^{(1)}\times \cdots\times L_{d_{m}}^{(m)})), 
\end{align}
for $(d_{1},\dots,d_{m})\in \mathbb N^{n}$ such that $d_{1}+\cdots +d_{m} = \operatorname{dim} \mathcal{X}$. First note that for any multiview variety, the function $D$ is symmetric under generic camera conditions. This implies that for any permutation $\sigma \in S_n$, $D(d_{1},\dots,d_{m})$ is equal to $D(d_{\sigma{(1)}},\dots,d_{\sigma{(m)}})$.

\begin{proposition} Let $\mathcal{C}$ be a generic arrangement of cameras.

\begin{enumerate}
    \item The multidegree of $\mathcal{M}_\mathcal{C}^L$ is given by the single number $D(1,0,\ldots,0)=1$.
    \item The multidegree of $\mathcal{L}_\mathcal{C}^X$ is given by the two numbers $D(2,0,\ldots,0)=0$ and $D(1,1,0,\ldots,0)=1$.
    \end{enumerate}
\end{proposition}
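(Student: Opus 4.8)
The plan is to combine the dimension computations of \Cref{cor:dim} with the symmetry of the multidegree function $D$ recorded above, and then to evaluate each surviving multidegree by a direct geometric intersection count using the structure of the maps $\Phi_\mathcal{C}|_L$ and $\Upsilon_\mathcal{C}|_X$. For part~(1), $\dim \mathcal{M}_\mathcal{C}^L = 1$ forces the only admissible index vectors to have a single entry equal to $1$, and by symmetry these all agree, so it suffices to compute $D(1,0,\ldots,0)$. This number counts the points of $\mathcal{M}_\mathcal{C}^L$ whose first coordinate lies on a general line $H\subseteq\PP^2$. Composing $\Phi_\mathcal{C}|_L$ with the first projection recovers the restriction $C_1|_L\colon L\to\PP^2$, which (since $L$ avoids the center $c_1$) is a linear isomorphism onto the line $C_1\cdot L$; a general $H$ meets $C_1\cdot L$ in one point, pulling back to a single point of $L$, so $D(1,0,\ldots,0)=1$.

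For part~(2), $\dim \mathcal{L}_\mathcal{C}^X = 2$ by \Cref{cor:dim}, so up to symmetry the admissible indices are $(2,0,\ldots,0)$ and $(1,1,0,\ldots,0)$. Here the $i$-th factor of $(\PP^2)^m$ records a line $\ell_i$, so a codimension-$d$ subspace imposes $d$ conditions in the dual $\PP^2$. The key observation is that the first-coordinate image of $\Upsilon_\mathcal{C}|_X$ is exactly the pencil $\Lambda(C_1 X)$ of lines through $C_1 X$: indeed, $X\in L$ forces $C_1 X\in C_1\cdot L$, and conversely every line through $C_1 X$ is attained, so this image is a one-dimensional family (a line in the dual $\PP^2$). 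For $D(2,0,\ldots,0)$ we require $\ell_1$ to equal a general point of the dual $\PP^2$ (a codimension-$2$ condition); a general point does not lie on the line $\Lambda(C_1 X)$, so there are no solutions and $D(2,0,\ldots,0)=0$.

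To compute $D(1,1,0,\ldots,0)$ I would impose one general linear condition in each of the first two factors, i.e.\ require $\ell_1,\ell_2$ to pass through general fixed points $p_1,p_2\in\PP^2$. Intersecting each condition with the corresponding pencil $\Lambda(C_i X)$ pins down $\ell_1$ and $\ell_2$ uniquely, as the lines joining $C_i X$ to $p_i$. It then remains to show that a generic such pair $(\ell_1,\ell_2)$ has a single preimage line through $X$: back-projecting $\ell_i$ gives the plane $\Pi_i=\{Y:(C_i^T\ell_i)^T Y=0\}$ through the center $c_i$, and since $\ell_i$ passes through $C_i X$ one checks $(C_i^T\ell_i)^T X=\ell_i^T C_i X=0$, so $X\in\Pi_i$. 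The two planes meet in a unique line $L=\Pi_1\cap\Pi_2$, which therefore contains $X$ and satisfies $C_i\cdot L=\ell_i$. This yields exactly one solution, so $D(1,1,0,\ldots,0)=1$.

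The main obstacle I anticipate is the bookkeeping in the line case: one must consistently read codimension conditions in the dual $\PP^2$ and verify that the fiber of $\Upsilon_\mathcal{C}|_X$ over a generic pair $(\ell_1,\ell_2)$ is a single line rather than a positive-dimensional family. This is exactly where genericity of $\mathcal{C}$ enters, guaranteeing $\Pi_1\neq\Pi_2$ so their intersection is one-dimensional and that no relevant point or line degenerates. Alternatively, one could transport the entire computation through the linear isomorphism $\mathcal{L}_\mathcal{C}^X\cong\mathcal{M}_{\widehat{\mathcal{C}}}^{2,1}$ of \Cref{thm:anchoredmulti_isomorphism}, but the direct count above keeps the underlying incidence geometry transparent and seems cleaner.
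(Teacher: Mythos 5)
Your argument is correct and follows essentially the same route as the paper: reduce to the listed index vectors via the dimension count and symmetry, intersect the relevant factor(s) with generic linear subspaces of $\PP^2$ (for $\mathcal{M}_\mathcal{C}^L$ the line $C_1\cdot L$, for $\mathcal{L}_\mathcal{C}^X$ the pencils $\Lambda(C_iX)$), and then back-project to exhibit a unique preimage in $L$, respectively a unique line through $X$ as the intersection of the two back-projected planes. The only cosmetic difference is that you phrase the hyperplane condition in the dual $\PP^2$ as ``lines through a general point,'' which is exactly what a generic linear form on that factor cuts out, so nothing is lost.
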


\begin{proof} $ $

$\textit{1}.$ Since $\mathcal{M}_{\mathcal{C}}^L$ is of dimension 1 and due to symmetry, we only need to consider one number, namely $D(1,0,\ldots,0)$. A generic linear form intersecting the line $C_1\cdot L$ leaves one point, say $x_1$. Its back-projected line meets $L$ in a unique point $X$. Recall that any point outside the back-projected line is not projected onto $x_1$ by $C_1$. Since $\mathcal{M}_\mathcal{C}^L$ equals the image of $\Phi_{\mathcal{C}}|_L$, $X$ is therefore the unique point on $L$ such that $x=\Phi_\mathcal{L}(X)$.

$\textit{2}.$ By symmetry and the fact that $\dim \mathcal{L}_\mathcal{C}^X=2$, we only need to determine $D(2,0,\ldots,0)$ and $D(1,1,0,\ldots,0)$. Two generic linear forms intersecting $\Lambda(C_1X)\subseteq \PP^2$ is an empty set, which is why $D(2,0,\ldots,0)=0$. Intersecting $\Lambda(C_1X)$ and $\Lambda(C_2X)$ each with generic linear forms leaves one point in each copy of $\PP^2$, say $\ell_1$ and $\ell_2$ that intersect $C_1X$ and $C_2X$ respectively. Since they are generic such that $C_iX\in \ell_i$ their back-projected planes $H_i$ both contain $X$. By genericity, $H_i$ meet in a unique line through $X$ that meets no center, showing $D(1,1,0,\ldots,0)=1$.
 \end{proof}


\subsection{The Euclidean distance problem} This section will be used for the proof of \Cref{cor: EDD-p}. It also explains in more detail the reduction of parameters mentioned in \Cref{sss: Red}, via \Cref{thm: EDbij}. 

It is not always true that linearly isomorphic varieties have the same Euclidean distance degree. Take for instance the circle and ellipse in $\RR^2$. The circle has EDD $2$ and the ellipse has EDD $4$. However, under additional assumptions, the EDD is the same:

\begin{proposition}\label{prop: EDbij} Let $X\subseteq \CC^n, Y\subseteq \CC^m$ (with $n\ge m$) and let $\psi:X\to Y$ sending $x$ to $Ax+b$ be an affine isomorphism given by a real full-rank matrix $A$ such that $AA^T=I$. Fix a generic $u\in \CC^n$. Then, $\mathrm{EDD}(X)=\mathrm{EDD}(Y)$. 

In particular, if $y_1^*,\ldots,y_k^*$ are the solutions to the critical equations of the ED problem on $Y$ given $Au+b$, then $x_1^*=A^T(y_1^*-b),\ldots,x_k^*=A^T(y_k^*-b)$ are the solutions to the critical equations of the ED problem on $X$ given $u$. 
\end{proposition}

We work with the critical equations, as defined in \cite{draisma2016euclidean}. Before the proof, we recall some linear algebra: We use that $A^TA$ is a projection matrix onto $\mathrm{Im}A^TA$, and that $\CC^n=\mathrm{Im} A^TA \oplus \ker A^TA$, by which we mean that any $x\in \CC^n$ can be written as a unique sum $x_1+x_2$ with $x_1\in \mathrm{Im} A^TA,x_2\in \ker A^TA$. Over the real numbers, this is an orthogonal decomposition, i.e. for $x_1,x_2$ as above we have $x_1\cdot x_2=0$ with respect to the standard inner product. Moreover, the rank of $A^TA$ is the rank of $A$. This implies that $A^T$ is injective on $\mathrm{Im} A$ and $X\subseteq \mathrm{Im}A^TA$.

\begin{proof} It is not hard to see that shifting a variety by a constant does not change the EDD. Therefore, we put $b=0$ and continue. 

Let $I_X=\langle f_1,\ldots,f_r\rangle$ be the defining ideal of $X$. Let $g_i=f_i\circ A^T$. We claim that $I_Y=\langle g_1,\ldots,g_r\rangle$ is the defining ideal of $Y$. Indeed, $y\in Y$ if and only if $A^Ty\in X$ if and only if $f_i(A^Ty)=0$ for each $i$. Further, a full-rank linear change of coordinates preserves the radicality of ideals. 

Since $\psi$ is an isomorphism, $x\in X$ is smooth if and only if $Ax\in Y$ is smooth. Now for a generic $u\in \CC^n$, let $z^*=(z_1,\ldots,z_m)\in Y$ be smooth and a solution to the critical equations given $Au$. Write $c_Y=\mathrm{codim}_{\CC^m}Y$. Then
\begin{align}
     g_i(z^*)=0~\mbox{ for all } i~\&~\mathrm{rank} \begin{bmatrix} (z^*-Au)^T\\
    \nabla g_1(z^*)\\
     \vdots \\
     \nabla g_k(z^*)  
    \end{bmatrix}=c_Y.
\end{align}
We define $w^*=A^Tz^*$ and prove that its a solution to the critical equations of $X$ given $u$. First, note that $f_i(w^*)=0$ for each $i$ by construction, and $A(w^*-A^TAu)=z^*-Au$. By the chain rule, $\nabla g_i(z^*)=\nabla f_i (A^Tz^*) A^T$. Thus we also have
\begin{align} \label{eq: rankcoY}
    \mathrm{rank} \left(\begin{bmatrix} (w^*-A^TAu)^T\\
    \nabla f_1(w^*)\\
     \vdots \\
     \nabla f_k(w^*)  
    \end{bmatrix}A^T\right)=c_Y.
\end{align}
Note that the submatrix of the last $k$ rows of the matrix in \Cref{eq: rankcoY} has rank $c_Y$. Now we argue that for $c_X=\mathrm{codim}_{\CC^n}X$,
\begin{align}\label{eq: rankcoX}
    \mathrm{rank} \begin{bmatrix} (w^*-A^TAu)^T\\
    \nabla f_1(w^*)\\
     \vdots \\
     \nabla f_k(w^*)  
    \end{bmatrix}=c_X.
\end{align}
Because $w^*$ is smooth in $X$, last $k$ rows of the matrix in \Cref{eq: rankcoX} are of rank $c_X$. The $(w^*-A^TAu)^T$ lies in the row span of those $k$ rows, and observe that $w^*-A^TAu$ lies in $ \mathrm{Im} A^TA$. This is because $z^*$ lies in the image of $A$. Therefore, 
\begin{align}
    A(w^*-A^TAu)\in \mathrm{span}\{A\nabla f_i(w^*)^T\}
\end{align}
implies 
\begin{align}
     w^*-A^TAu\in \mathrm{span}\{A^TA\nabla f_i(w^*)^T\}.
\end{align}
Since $X\subseteq \mathrm{Im}A^TA$, it follows that $\nabla f_i(w^*)^T$ span $\ker A^TA$. This is because $f_i$ generate the (real) linear forms $l_j$ that vanish on this linear space $\mathrm{Im}A^TA$ and their gradients span the (real) orthogonal complement $\ker A^TA$. So let $\lambda_i$ be such that $w^*-A^TAu$ equals the sum of $\lambda_iA^TA\nabla f_i(w^*)^T$. Then $w^*-A^TAu$ equals $\sum \lambda_i\nabla f_i(w^*)^T-v$, for some $v\in \ker A^TA$, spanned by $ \nabla f_i(w^*)^T$. This proves \Cref{eq: rankcoX}.  

Finally, we motivate why we can change $A^TAu$ to $u$ in \Cref{eq: rankcoX}. Showing that $A^TAu-u$ is linearly dependent on $\nabla f_i(w^*)^T$ is suffcient due to the fact that $(w^*-A^TAu)+(A^TAu-u)=w^*-u$. However, $A^TAu-u$ lies in $\ker A^TA$, and therefore this follows from the above. 

For the other direction, let $w^*$ be a smooth point satisfying
\begin{align} \label{eq: fullrankcoX}
     f_i(w^*)=0~\mbox{ for all } i~\&~\mathrm{rank} \begin{bmatrix} (w^*-u)^T\\
    \nabla f_1(w^*)\\
     \vdots \\
     \nabla f_k(w^*)  
    \end{bmatrix}=c_X.
\end{align}
Then $(w^*-u)^T$ is a linear combination of the rows $\nabla f_i(w^*)$. Then $(w^*-u)^TA^T$ is a linear combination of $\nabla f_i(w^*)A^T$. Writing $z^*=Aw^*$ and recalling that this is a smooth point of $Y$, we have that $(z^*-Au)^T$ is a linear combination of $\nabla g_i(z^*)$. Therefore, 
\begin{align}\label{eq: fullrankcoY}
    g_i(z^*)=0~\mbox{ for all } i~\&~ \mathrm{rank} \begin{bmatrix} (z^*-Au)^T\\
    \nabla g_1(z^*)\\
     \vdots \\
     \nabla g_k(z^*)  
    \end{bmatrix}=c_Y,
\end{align}
are all satisfied.
\end{proof}

In the theorem below we use the relation between $\mathcal{C}$ and $\widetilde{\mathcal{C}}$ and $\widehat{\mathcal{C}}$ from \Cref{thm: anchoredmulti-isomorphism}.

\begin{theorem}\label{thm: EDbij}  Let $U_i\subseteq \PP^{2}$ be affine patches and write $U=U_1\times \cdots\times U_m$. Fix real matrices $A_i:\CC^3\to \CC^2$ such that $A_iA_i^T=I$. Let $A:(\CC^3)^m\to (\CC^2)^m$ be the map that sends $(x_1,\ldots,x_m)\in (\PP^2)^m$ to $(A_1x_1,\ldots,A_mx_m)\in (\PP^1)^m$. Let $u\in U_1\times \cdots\times U_m$ be generic.
\begin{enumerate}
    \item Assume $U_i\cap (C_i\cdot L)\neq \emptyset$ for each $i$. Write $V_i=A_i(U_i\cap (C_i\cdot L))\subseteq \RR^2$, and let $V=V_1\times\cdots\times V_m$. If $y^*$ is a critical point of the ED problem for $\mathcal{M}_{\widetilde{\mathcal{C}}}^{1,1}\cap V$ given $Au$, then $x^*=A^Ty^*$ is a critical point of the ED problem for $\mathcal{M}_\mathcal{C}^L\cap U$ given $u$.
    \item Assume $U_i\cap \Lambda(C_iX)\neq \emptyset$ for each $i$. Write $V_i=A_i(U_i\cap \Lambda(C_iX))$, and let $V=V_1\times\cdots\times V_m$. If $y^*$ is a critical point of the ED problem for $\mathcal{M}_{\widehat{\mathcal{C}}}^{2,1}\cap V$ given $Au$, then $x^*=A^Ty^*$ is a critical point of the ED problem for $\mathcal{L}_\mathcal{C}^X\cap U$ given $u$. 
\end{enumerate}
In both cases, this is a bijection of critical points.
\end{theorem}

\begin{proof} It is a consequence of \Cref{thm: anchoredmulti-isomorphism} that $A$ is an isomorphism of affine varieties in both \textit{1}. and \textit{2} (we set $\psi_{\mathcal{C},i}=A_i$). Then we can directly apply \Cref{prop: EDbij}.
\end{proof}


\section{Euclidean Distance Degree Preliminaries}\label{s: EDDprelim}

The main theorem of this article is:

\begin{customtheorem}{\ref{thm: EDDthm}} Let $\mathcal{C}$ be a generic arrangement of $m$ cameras.  
\begin{enumerate}
    \item $\mathrm{EDD}( \mathcal{M}_\mathcal{C}^L)=3m-2 $.
    \item If $m\ge 3$, then $\mathrm{EDD}( \mathcal{L}_\mathcal{C}^X)=\frac{9}{2}m^2-\frac{19}{2}m+3.$
\end{enumerate}
\end{customtheorem}

In order to compute these two Euclidean distance degrees we make use of the following theorem:

\begin{theorem}[Theorem 3.8 of \cite{EDDegree_point}]
Let $X\subseteq \CC^n$ be a smooth variety and let $U_{\beta}$ denote the complement of the hypersurface $\sum_{1\leq i\leq n}(z_{i}-\beta_{i})^2+\beta_{0}=0$ in $\mathbb C^n$ where $z\in\mathbb C^n$ and $\beta\in\mathbb C^{n+1}$. Then,
\begin{align}
    \mathrm{EDD}(X)=(-1)^{\operatorname{dim} X}\chi(X\cap U_{\beta}).
\end{align}
\end{theorem}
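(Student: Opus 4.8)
The plan is to identify $\mathrm{EDD}(X)$ with the number of complex critical points of a generic quadric restricted to $X$, and then to evaluate that number topologically through complex Morse theory together with the additivity of the Euler characteristic. First I would recall that, by the definition of the ED degree of the smooth variety $X$, $\mathrm{EDD}(X)$ is the number of critical points of the squared-distance function $d_u(z)=\sum_{i}(z_i-u_i)^2$ on $X$ for generic data $u\in\CC^n$. Writing $\beta=(\beta_0,\beta_1,\dots,\beta_n)$ and $u=(\beta_1,\dots,\beta_n)$, the function $q_\beta(z)=\sum_i(z_i-\beta_i)^2+\beta_0$ differs from $d_u$ only by the additive constant $\beta_0$, so $q_\beta|_X$ and $d_u|_X$ have identical critical loci. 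Hence $\mathrm{EDD}(X)$ equals the number $\mu$ of critical points of $q_\beta|_X$ for generic $\beta$.

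Second, I would fix the genericity properties of $\beta$ needed later. A Bertini-type transversality argument shows that for generic $\beta$ the function $q_\beta|_X$ has finitely many critical points, all nondegenerate, so it is a complex Morse function; moreover the quadric $\{q_\beta=0\}$ meets $X$ transversally. The key extra observation is that, because $\beta_0$ enters $q_\beta$ purely as an additive constant and is free, the value $0$ is a generic value of $q_\beta|_X$; thus $X\cap\{q_\beta=0\}$ is a generic fiber of $q_\beta|_X$, and in particular the critical points all avoid it and lie in $X\cap U_\beta$.

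Third comes the topological count. Assuming $q_\beta|_X$ is cohomologically tame on the smooth affine variety $X$ of dimension $d=\dim X$, complex Morse theory gives $\mu=(-1)^{d}\bigl(\chi(X)-\chi(F)\bigr)$, where $F$ is a generic fiber of $q_\beta|_X$ and each nondegenerate critical point contributes one middle-dimensional vanishing cycle. Using additivity of the topological Euler characteristic over the closed set $X\cap\{q_\beta=0\}$ and its open complement, together with the fact that $X\cap\{q_\beta=0\}$ is itself a generic fiber,
\[
\chi(X\cap U_\beta)=\chi(X)-\chi\bigl(X\cap\{q_\beta=0\}\bigr)=\chi(X)-\chi(F),
\]
so that $\mu=(-1)^{d}\chi(X\cap U_\beta)$. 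Combining with the first step yields $\mathrm{EDD}(X)=(-1)^{\dim X}\chi(X\cap U_\beta)$.

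The step I expect to be the main obstacle is the tameness at infinity underlying the Morse-theoretic identity $\mu=(-1)^{d}(\chi(X)-\chi(F))$: one must rule out critical points escaping to infinity and guarantee that the Euler characteristic of $X$ is assembled only from the generic fiber and the finitely many honest critical points. Establishing this typically requires choosing a compactification $\overline{X}\subseteq\PP^n$, stratifying the boundary, and verifying by a dimension count that for generic $\beta$ the closure of the quadric (whose part at infinity is the isotropic quadric) is transverse to every boundary stratum, so that $q_\beta|_X$ is non-characteristic at infinity. The reduction steps and the Euler-characteristic bookkeeping are comparatively routine once this tameness is in place.
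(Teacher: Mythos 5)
First, note that the paper does not contain a proof of this statement at all: it is imported verbatim as Theorem 3.8 of \cite{EDDegree_point}, so there is no internal argument to compare against; your proposal must be judged against the proof in that reference, which controls the behavior at infinity microlocally (via characteristic cycles and vanishing cycles) rather than by compactified Morse theory. Your reduction steps are fine and standard: for smooth $X$ and generic data, $\mathrm{EDD}(X)$ is the number of critical points of $q_\beta|_X$ (the constant $\beta_0$ does not change the critical locus but makes $0$ a generic value of $q_\beta|_X$), and the bookkeeping $\chi(X\cap U_\beta)=\chi(X)-\chi(F)$ is correct by additivity of $\chi$ on complex varieties, as is the tame Morse identity $\mu=(-1)^{d}\bigl(\chi(X)-\chi(F)\bigr)$.

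The genuine gap is exactly where you predicted, but your proposed fix cannot work. Expanding $q_\beta=\sum_i z_i^2-2\sum_i\beta_i z_i+\bigl(\sum_i\beta_i^2+\beta_0\bigr)$, the parameter $\beta$ only moves the affine-linear part; hence the intersection of $\overline{\{q_\beta=0\}}\subseteq\PP^n$ with the hyperplane at infinity is the \emph{fixed} isotropic quadric $\{x_0=0,\ \sum_{i=1}^n x_i^2=0\}$ for every $\beta$. This is the base locus at infinity of the linear system, and Bertini-type genericity yields transversality only away from the base locus. So the claim that ``for generic $\beta$ the closure of the quadric is transverse to every boundary stratum of $\overline{X}$'' is not obtainable by a dimension count in $\beta$: it is a condition on the fixed mutual position of $\overline{X}\setminus X$ and the isotropic quadric, and it genuinely fails for some smooth $X$ — e.g.\ the isotropic line $X=\{(t,it):t\in\CC\}\subseteq\CC^2$, whose unique point at infinity $(0:1:i)$ lies on the isotropic quadric. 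Note that the theorem, which carries no transversality-at-infinity hypothesis, is still true there: $q_\beta|_X$ is affine-linear with no critical points, and $\chi(X\cap U_\beta)=\chi(\CC)-\chi(\mathrm{point})=0$, consistent with $\mathrm{EDD}(X)=0$. Consequently your route proves at best a weaker statement with the added hypothesis that $\overline{X}$ is (stratified) transverse to the isotropic quadric at infinity, whereas the quoted theorem holds for arbitrary smooth closed $X\subseteq\CC^n$. Closing the gap in this generality requires a different mechanism for ruling out Euler-characteristic contributions at infinity — in \cite{EDDegree_point} this is achieved with the vanishing-cycle/index-formula machinery, for which genericity of the full parameter $\beta\in\CC^{n+1}$ suffices with no condition on $\overline{X}$ at infinity.
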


Here $\chi$ is the topological Euler characteristic. In the next section, we closely follow \cite{EDDegree_point}, by specializing their techniques to our setting. First, we provide the reader with helpful preliminaries. We often take this section for granted and do not always refer to specific results from it.

We have verified with numerical evidence that these formulas hold for $m\le 10$. The code is attached.


\subsection{The Euler characteristic} 

There are different approaches to defining the Euler characteristic of a topological space. References to the broader topic of algebraic topology include \cite{may1999concise,hatcher2005algebraic}. For instance, given a \textit{triangulation} of a topological space, the Euler characteristic is the alternating sum
\begin{align}
    k_0-k_1+k_2-\ldots,
\end{align}
where $k_i$ is the number of simplices of dimension $i$. An \textit{$n$-simplex} is a polytope of dimension $n$ with $n+1$ vertices, and a \textit{triangulation} is essentially a way of writing a space as a union of simplices that intersect in a good way. Importantly, all real and complex algebraic varieties can be triangulated \cite{hofmann2009triangulation} with respect to Euclidean topology.

The Euler characteristic can more generally be defined for CW complexes and any topological space through singular homology. For spaces where all definitions apply, they are the same.

The following is used in \cite{EDDegree_point}. 
\begin{lemma}\label{le: Eul1} Let $N,M$ be subvarieties of a complex variety. 
\begin{enumerate}
    \item $\chi(M\cup N)=\chi(M)+\chi(N)-\chi(M\cap N).$
\item $
    \chi(M\setminus N)=\chi(M)-\chi(N).$
\end{enumerate}

\end{lemma}
The above does not hold over the real numbers. For instance, $\chi(\RR)=1$, while $\chi(\{x\})=1$ and $\chi(\RR\setminus \{x\})=2$.

\begin{lemma}[{\hspace{1sp}\cite[Section 2.1]{hatcher2005algebraic}}]\label{le: Eul2} Let $f: X\to Y$ be a homeomorphism, such as an isomorphism between varieties, then 
\begin{align}
    \chi(X)=\chi(Y).
\end{align}
\end{lemma}

\begin{lemma}[{\hspace{1sp}\cite[Chapter 10, Section 1]{may1999concise}}]\label{le: Eul3} The Euler Characteristic of $\PP^n$ is $n+1$.  
\end{lemma}


\subsection{Chow rings}\label{ss: Chow}

We refer to \cite{fulton2013intersection,eisenbud-harris:16} for a thorough treatment of intersection theory, and \cite{Fulvio21notes} for a friendly introduction. Here we recall the basic definitions and results that are needed to understand this material.

Let $X$ be a variety. We denote by $Z(X)$ the
free abelian group of formal integral linear combinations of irreducible subvarieties of $X$. An \textit{effective cycle} is a formal sum $\sum n_i Y_i$ of irreducible subvarieties $Y_i$ with $n_i\ge0$. A \textit{zero-cycle} is a formal sum of zero-dimensional varieties $Y_i$. The \textit{degree} of a zero-cycle is the sum of the associated integers $n_i$ as in \cite[Definition 1.4]{fulton2013intersection}. We say that two irreducible subvarieties $Y_0, Y_\infty \in Z( X)$ are \textit{rationally equivalent}, and write $Y_0\sim Y_\infty$ or $Y_0\equiv Y_\infty$ if there exists
an irreducible variety $W \subseteq   X \times\PP^1$, whose projection onto $\PP^1$ is dense, such that $W \cap ( X \times  \{(1:0)\}) = Y_0$ and $W \cap  (X \times \{(0:1)\}) = Y_\infty$. 

The Chow group of $X$ is
\begin{align}
    \mathrm{CH}( X) = Z( X)/\sim.
\end{align}
For a subvariety, $Y \subseteq   X$, write $[Y]$ for the class in $\mathrm{CH}( X)$ of its associated effective cycle. We now aim to turn this group into a ring, by giving it a multiplicative structure.

Let $X$ be an irreducible variety and let $Y_1, Y_2$ be subvarieties. $Y_1$ and $Y_2$ \textit{intersect transversely} at $p \in Y_1 \cap Y_2$ if $Y_1, Y_2$ and $X$ are smooth at $p$ and
$T_pY_1 + T_pY_2 = T_pX$. Further, $Y_1$ and $Y_2$ are \textit{generically transverse} if they intersect transversely at generic points of every irreducible component of the intersection $Y_1 \cap Y_2$.

\begin{theorem}\label{thm: prod} Let $ X$ be a smooth variety. Then there is a unique product structure on $\mathrm{CH}( X)$
such that whenever $A, B$ are generically transverse subvarieties of $X$, then $[A][B] = [A \cap B]$.
This product makes $\mathrm{CH}( X)$ into a graded ring, where the grading is given by codimension.
\end{theorem}

A natural example of Chow rings are those of products of projective space,
\begin{align}
    \mathrm{CH}((\PP^n)^s)\cong \ZZ[[H_1],\ldots,[H_s]]/\langle [H_1]^{n+1},\ldots,[H_s]^{n+1}\rangle.
\end{align}
In the above ring isomorphism, $[H_i]$ represent the class of a hyperplane in $\mathrm{CH}(\PP^n)$ in factor $i$.

To a morphism of smooth varieties $f:X\to Y$,~we can associate, the \textit{pushforward} $f_*:\mathrm{CH}(X)\to \mathrm{CH}(Y)$ and the \textit{pullback}~$f^*:\mathrm{CH}(Y)\to \mathrm{CH}(X),$ two Chow ring maps. 

We define the pushforward on irreducible subvarieties $A\subseteq X$ by setting 
\begin{align}
    f_*(A):=\begin{cases}0 & \textnormal{if the generic fiber of }f|_A \\
    &\textnormal{is infinite},\\
    d[f(A)] & \textnormal{if the generic fiber of }f|_A\\
    &\textnormal{has cardinality }d.\end{cases}
\end{align}
By \textit{generic fiber} we mean $f|_A^{-1}(y)$ for generic $y\in f(A)$.

We say that $A\subseteq Y$ is \textit{generically transverse} to $f$ if $f^{-1}(A)$ is generically reduced and the codimension of $f^{-1}(A)$ in $X$ equals the codimension of $A$ in $Y$. The pullback $f_*$ is defined as the unique map $\mathrm{CH}(Y) \to \mathrm{CH}(X)$ such that, if $A \subseteq Y$ is generically transverse to $f$, then $f^*[A] := [f^{-1}(A)]$; see \cite[Theorem 1.23]{eisenbud-harris:16}.


\subsection{Chern classes}\label{ss: Chern} In intersection theory, Chern classes are algebraic invariants of a variety that lie in its Chow ring. General references again include \cite{fulton2013intersection,eisenbud-harris:16}. Here we only state the properties of them that we use.

Chern classes $c(E)$ are in general defined for vector bundles $E$, but when the vector bundle is the tangent bundle of a smooth variety $X$, then we write $c(X)$ for the total Chern class.

\begin{lemma}[Whitney Sum Formula {\cite[Theorem 3.2]{fulton2013intersection}}] For a short exact sequence $0\to E'\to E\to E''\to 0$ of vector bundles on a variety $X$, we have for the total Chern classes that
\begin{align}
    c(E)=c(E')c(E'').
\end{align}
\end{lemma}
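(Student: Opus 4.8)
The identity is a standard structural property of Chern classes, and the plan is to prove it by the \emph{splitting principle}, reducing the general statement to the case of line bundles. First I would record the projective bundle formula: for a rank-$r$ bundle $F$ on $X$ with projection $p:P(F)\to X$ and $\xi=c_1(\mathcal{O}_F(1))$, the group $\mathrm{CH}(P(F))$ is a free $\mathrm{CH}(X)$-module on $1,\xi,\dots,\xi^{r-1}$; in particular $p^*$ is injective. On $P(F)$ the pullback $p^*F$ contains the tautological line subbundle, so it is an extension of a rank-$(r-1)$ bundle by a line bundle. Iterating this over the associated flag bundle produces a morphism $f:X'\to X$ with $f^*$ injective on Chow groups such that $f^*F$ admits a complete filtration by subbundles whose successive quotients are line bundles.

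Applying the splitting principle simultaneously to $E'$ and $E''$ yields $f:X'\to X$ with $f^*$ injective, over which $f^*E'$ and $f^*E''$ acquire complete filtrations with line-bundle quotients $L'_1,\dots,L'_{r'}$ and $L''_1,\dots,L''_{r''}$. Using exactness of $0\to f^*E'\to f^*E\to f^*E''\to 0$, I would concatenate the filtration of $f^*E'$ with the preimage under the surjection of the filtration of $f^*E''$, thereby equipping $f^*E$ with a complete filtration whose line-bundle quotients are exactly $L'_1,\dots,L'_{r'},L''_1,\dots,L''_{r''}$. It therefore suffices to prove that any bundle $G$ carrying a complete filtration by line-bundle quotients $M_1,\dots,M_k$ satisfies $c(G)=\prod_{i=1}^{k}(1+c_1(M_i))$, together with the observation that the desired equality descends along the injective map $f^*$ by naturality of Chern classes.

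The factorization $c(G)=\prod_i(1+c_1(M_i))$ follows by induction on $k$ from the single case of an exact sequence $0\to M\to G\to G'\to 0$ with $M$ a line bundle. I would verify this base case directly from the definition of the total Chern class as the inverse of the total Segre class, computing the Segre classes as pushforwards of powers of $c_1(\mathcal{O}(1))$ along $P(G)\to X$. Granting the factorization, the classes $c(f^*E)$, $c(f^*E')$, $c(f^*E'')$ are products over their respective Chern roots; since the roots attached to $f^*E$ are the disjoint union of those of $f^*E'$ and $f^*E''$, we obtain $c(f^*E)=c(f^*E')\,c(f^*E'')$, and injectivity of $f^*$ then gives $c(E)=c(E')c(E'')$ on $X$.

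The main obstacle is precisely the line-bundle base case together with the projective bundle formula underpinning the splitting principle: both must be derived directly from the Segre-class definition of Chern classes, and it is exactly here that one must be careful not to invoke the Whitney formula that is being established, so as to avoid a circular argument.
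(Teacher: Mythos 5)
The paper does not prove this lemma: it is quoted verbatim from Fulton \cite{fulton2013intersection} as background for the Chern class computations in the appendix, so there is no in-paper argument to compare against. Your proposal is the standard splitting-principle proof, and it is in fact essentially the argument given in the cited source: pass to a flag bundle $f:X'\to X$ over which $f^*E'$ and $f^*E''$ (hence, by the exact sequence, $f^*E$) acquire complete filtrations with line-bundle quotients, prove the factorization $c(G)=\prod_i(1+c_1(M_i))$ for filtered bundles, and descend along the injective map $f^*$ furnished by the projective bundle formula. The outline is correct, including the concatenation of filtrations and the observation that the Chern roots of $f^*E$ are the disjoint union of those of $f^*E'$ and $f^*E''$. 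The one place where you assert rather than argue is the base case $0\to M\to G\to G'\to 0$ with $M$ a line bundle, equivalently the identity $\prod_i\bigl(\xi+c_1(M_i)\bigr)=0$ on $P(G)$ from which the factorization follows by uniqueness of the degree-$r$ monic relation satisfied by $\xi$ over $\mathrm{CH}(X)$; this Segre-class computation is the technical heart of Fulton's Section 3.2 and would need to be written out (without invoking Whitney) for the proof to be complete. You correctly identify this as the point where circularity must be avoided, so the gap is one of omitted detail rather than a flaw in the approach.
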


By a \textit{divisor} of $X$ we mean a subvariety of codimension one. Let $i: A\hookrightarrow X$ be the inclusion map for a subvariety $A\subseteq X$. For an element $[V]\in \mathrm{CH}(X)$, the restriction $[V]|_A$ denotes the pullback $i^*[V]$. If $V$ is generically transverse to $i$, then $[V]|_A=[V\cap A]$. We observe that $[V]|_A[U]|_A$ equals $i^*[V]i^*[U]$ and since $i^*$ is a ring homorphism, this equals $i^*([V][U])=([V][U])_A$.

\begin{lemma}[Adjunction Formula {\cite[Example 3.2.11]{fulton2013intersection},\cite[Theorem 5.3]{eisenbud-harris:16}}] If $X$ is smooth variety and $D$ a smooth divisor on $X$, then 
\begin{align}
    c(D)=\frac{c(X)|_D}{(1+[D])|_D}.
\end{align}
\end{lemma}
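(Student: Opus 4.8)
The plan is to derive the formula from the fundamental short exact sequence relating the tangent bundle of the ambient variety, the tangent bundle of the divisor, and the normal bundle, and then to apply the Whitney Sum Formula stated above.

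First I would invoke the normal bundle sequence: for a smooth divisor $D$ inside a smooth variety $X$, restricting the tangent bundle of $X$ to $D$ yields a short exact sequence of vector bundles on $D$,
\begin{align}
0\to T_D\to T_X|_D\to N_{D/X}\to 0,
\end{align}
where $N_{D/X}$ is the normal bundle. This is the algebraic analogue of the splitting, at a point of $D$, of the ambient tangent space into tangent and normal directions, and it is exact precisely because both $D$ and $X$ are smooth, so that all three sheaves are locally free.

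The key computation is the identification of the normal bundle of a divisor. Since $D$ is a Cartier divisor on the smooth variety $X$, its normal bundle is the restriction of the associated line bundle, $N_{D/X}\cong \mathcal{O}_X(D)|_D$. Concretely, $D$ is locally the zero locus of a single regular function, and the differential of that function furnishes the isomorphism between the conormal bundle and $\mathcal{O}_X(-D)|_D$. Taking first Chern classes gives $c_1(N_{D/X})=[D]|_D$, and since the total Chern class of a line bundle $\mathcal L$ is $1+c_1(\mathcal L)$, we obtain $c(N_{D/X})=1+[D]|_D=(1+[D])|_D$, using that restriction to $D$ is the ring homomorphism $i^*$ for the inclusion $i\colon D\hookrightarrow X$, so it sends $1+[D]$ to $1+[D]|_D$.

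With these ingredients in place, I would apply the Whitney Sum Formula to the sequence above to get $c(T_X|_D)=c(T_D)\,c(N_{D/X})$. Writing $c(T_X|_D)=c(X)|_D$ (Chern classes commute with the pullback $i^*$) and $c(T_D)=c(D)$, this reads $c(X)|_D=c(D)\,(1+[D])|_D$. Finally, since the positive-codimension part of the graded ring $\mathrm{CH}(D)$ is nilpotent, the element $(1+[D])|_D$ is a unit, so I may divide and conclude $c(D)=c(X)|_D/(1+[D])|_D$, as claimed. I expect the main obstacle to be the normal bundle identification $N_{D/X}\cong\mathcal{O}_X(D)|_D$ together with the computation $c_1(N_{D/X})=[D]|_D$: this is where the divisor hypothesis enters essentially, since it relies on the local description of $D$ as the zero scheme of a section of a line bundle rather than on any purely formal intersection-theoretic fact. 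Everything else reduces to manipulation of total Chern classes through the Whitney Sum Formula and the ring structure of $\mathrm{CH}(D)$.
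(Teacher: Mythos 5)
Your argument is correct and is the standard proof of the adjunction formula: the normal bundle sequence $0\to T_D\to T_X|_D\to N_{D/X}\to 0$, the identification $N_{D/X}\cong\mathcal{O}_X(D)|_D$ giving $c(N_{D/X})=(1+[D])|_D$, the Whitney sum formula, and division by the unit $(1+[D])|_D$. The paper does not prove this lemma at all --- it is quoted from \cite{fulton2013intersection} and \cite{eisenbud-harris:16} in its preliminaries --- and your derivation is exactly the one found in those references, so there is nothing to compare beyond noting that you have correctly reconstructed it.
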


\begin{lemma}[Functoriality {\cite[Theorem 5.3]{eisenbud-harris:16}}] Let $f:X\to Y$ be morphism of smooth varieties, then 
\begin{align}
    f^*c(E)=c(f^*E),
\end{align}
for vector bundles $E$ on $Y$.
\end{lemma}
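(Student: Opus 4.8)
The plan is to reduce the statement to the single case of the first Chern class of a line bundle, where functoriality holds essentially by construction, and then bootstrap to an arbitrary bundle using Grothendieck's definition of Chern classes via projective bundles.

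First I would recall the defining relation. For a rank-$r$ bundle $E$ on $Y$, let $\pi:\mathbb{P}(E)\to Y$ be its projectivization and $\xi=c_1(\mathcal{O}_{\mathbb{P}(E)}(1))$. By the projective bundle formula, $\mathrm{CH}(\mathbb{P}(E))$ is a free $\mathrm{CH}(Y)$-module with basis $1,\xi,\ldots,\xi^{r-1}$, and the classes $c_i(E)\in\mathrm{CH}^i(Y)$ are defined as the unique elements satisfying
\begin{align}
    \sum_{i=0}^{r}(-1)^i\,\pi^*c_i(E)\cdot\xi^{\,r-i}=0.
\end{align}
Uniqueness here is the structural fact I will exploit: a relation of this shape determines its coefficients.

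Next I would pull back along $f$. Given $f:X\to Y$, form the fiber square with $\mathbb{P}(f^*E)=X\times_Y\mathbb{P}(E)$, projections $\pi':\mathbb{P}(f^*E)\to X$ and $\tilde f:\mathbb{P}(f^*E)\to\mathbb{P}(E)$, which satisfy $\tilde f^*\pi^*=\pi'^*f^*$. Crucially, $\tilde f^*\mathcal{O}_{\mathbb{P}(E)}(1)=\mathcal{O}_{\mathbb{P}(f^*E)}(1)$, so by the line bundle base case $\tilde f^*\xi=\xi'$, where $\xi'=c_1(\mathcal{O}_{\mathbb{P}(f^*E)}(1))$. Applying the ring homomorphism $\tilde f^*$ to the defining relation and using these identities yields
\begin{align}
    \sum_{i=0}^{r}(-1)^i\,\pi'^*\big(f^*c_i(E)\big)\cdot(\xi')^{\,r-i}=0.
\end{align}
This is precisely a relation of the form defining the Chern classes of $f^*E$; by the uniqueness from the projective bundle formula for $\mathbb{P}(f^*E)$ over $X$, I conclude $f^*c_i(E)=c_i(f^*E)$ for every $i$, hence $f^*c(E)=c(f^*E)$.

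The main obstacle is the base case rather than the bootstrapping: one must independently establish $\tilde f^*c_1(L)=c_1(\tilde f^*L)$ for line bundles, which rests on the compatibility of pullback with the Cartier divisor description of the first Chern class, together with the well-definedness of $f^*$ as a ring homomorphism on the Chow rings of smooth varieties (\Cref{thm:prod}) and the commutativity $\tilde f^*\pi^*=\pi'^*f^*$ in the fiber square. An equivalent route avoiding the explicit relation is the splitting principle: choose a flag bundle $p:F\to Y$ with $p^*$ injective and $p^*E$ filtered by line bundles with first Chern classes $\alpha_1,\ldots,\alpha_r$, pull the whole picture back along $f$, and compare $\prod_i(1+\tilde f^*\alpha_i)$ on both sides using the line bundle case; injectivity of the pulled-back $p^*$ then gives the claim. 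Either way, once functoriality for line bundles is in hand, the general case is formal.
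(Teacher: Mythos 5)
The paper does not actually prove this lemma: it is stated as standard background with a citation to \cite[Theorem 5.3]{eisenbud-harris:16}, so there is no in-paper argument to compare against. Your proof is the standard Grothendieck-style argument --- characterize the $c_i(E)$ as the unique coefficients in the relation satisfied by $\xi=c_1(\mathcal{O}_{\mathbb{P}(E)}(1))$ over $\mathrm{CH}(Y)$, pull that relation back through the fiber square $\mathbb{P}(f^*E)=X\times_Y\mathbb{P}(E)$ using $\tilde f^*\pi^*=\pi'^*f^*$ and $\tilde f^*\xi=\xi'$, and invoke uniqueness in the projective bundle formula for $\mathbb{P}(f^*E)$ --- and it is correct, with the genuine content residing exactly where you flag it, namely functoriality of $c_1$ for line bundles and the projective bundle formula itself. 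This is essentially the proof found in the cited reference, so nothing further is needed.
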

By putting $E$ to be the tangent bundle of $Y$, its pullback equals $X$ if $f$ is an isomorphism, which we make precise below.

\begin{lemma} Let $f:X\to Y$ be an isomorphism, then 
\begin{align}
    c(X)=f^*c(Y).
\end{align}
\end{lemma}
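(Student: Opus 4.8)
The plan is to derive this directly from the Functoriality lemma stated just above, by specializing the vector bundle $E$ to the tangent bundle of $Y$. Recall that for a smooth variety the total Chern class $c(Y)$ is by definition $c(T_Y)$, the total Chern class of the tangent bundle $T_Y$. Applying Functoriality to the morphism $f$ with $E=T_Y$ gives at once
\begin{align}
    f^*c(Y)=f^*c(T_Y)=c(f^*T_Y),
\end{align}
so the entire problem reduces to identifying the pullback bundle $f^*T_Y$.

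The key geometric input is that when $f$ is an isomorphism its differential provides an isomorphism of vector bundles $T_X\cong f^*T_Y$. For any morphism of smooth varieties the differential is a bundle map $df:T_X\to f^*T_Y$. Writing $g=f^{-1}$ for the inverse morphism and applying the chain rule to the identities $g\circ f=\mathrm{id}_X$ and $f\circ g=\mathrm{id}_Y$ shows that $df$ admits a two-sided inverse, so it is an isomorphism on each fiber and hence an isomorphism of bundles. Consequently $c(f^*T_Y)=c(T_X)=c(X)$.

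Chaining these identities yields
\begin{align}
    c(X)=c(T_X)=c(f^*T_Y)=f^*c(T_Y)=f^*c(Y),
\end{align}
which is the claim. I expect the only point meriting care to be the passage from a fiberwise linear isomorphism to a genuine isomorphism of vector bundles; but since both $f$ and its inverse $g$ are morphisms, their differentials are bundle maps and the chain-rule relations hold globally rather than pointwise, so $df$ is a bundle isomorphism with no further argument needed. Everything else is a direct application of the Functoriality lemma, which we are entitled to assume.
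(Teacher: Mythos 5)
Your proposal is correct and follows essentially the same route as the paper, which presents this lemma as an immediate consequence of the Functoriality lemma applied to $E=T_Y$, together with the observation that for an isomorphism the pullback of the tangent bundle of $Y$ is the tangent bundle of $X$. The extra detail you supply about $df$ being a bundle isomorphism via the chain rule is a correct elaboration of the same idea, not a different approach.
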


\begin{lemma}[{\hspace{1sp}\cite[Example 3.2.11]{fulton2013intersection}}]\label{le: ChernP} Let $[H]$ be the class of a hyperplane of $\PP^n$. Then we have
\begin{align}
    c(\PP^n)=(1+[H])^{n+1}.
\end{align}
\end{lemma}

An important property we use is the next result. The \textit{top} Chern class $c_{\mathrm{top}}(X)$ of $X$ is the zero-cycle part written of $c(X)\in \mathrm{CH}(X)$.

\begin{theorem}[Chern-Gauss-Bonnet {\cite[Section 3.3]{griffiths2014principles}}]\label{thm: CGB} For a smooth variety $X$, we have
\begin{align}
    \chi(X)=\deg(c_{\mathrm{top}}(X)).
\end{align}
\end{theorem}

It happens that authors use the integration symbol for the degree of a zero-cycle $[Z]$ in $X$ the follows sense,
\begin{align}
 \int_{X}[Z]:=\deg(Z).
\end{align}
More generally, let $[Z]$ be a formal sum of irreducible subvarieties of $X$ of codimension $k$. Consider the inclusion $i:A\hookrightarrow X$ for a $k$-dimensional variety $A$. We get the following,
\begin{align}
 \int_{A}[Z]|_A=\int_A i^*[Z]=\int_X [Z][A],
\end{align}
where we assume that $i^{-1}(Z)$ is a $0$-dimensional.

Next, we consider Chern classes of blow-ups. First some notation. Let $X\subseteq Y$ be an inclusion of smooth varieties. Let $\Tilde{Y}$ be the blow-up of $Y$ at $X$. Let $\Tilde{X}$ be the exceptional locus. Let $\pi,\rho$ be the projection maps and $j,i$ the inclusion maps. The following diagram commutes:
\begin{equation}\label{eq: diagram} \begin{tikzcd}
\Tilde{X} \arrow{r}{j} \arrow[swap]{d}{\rho} & \Tilde{Y}  \arrow{d}{\pi} \\%
X \arrow{r}{i}& Y
\end{tikzcd}.
\end{equation}
Porteus' formula \cite[Theorem 15.4]{fulton2013intersection} gives an expression for the Chern class of $\Tilde{Y}$ in terms of the Chern classes of $X$ and $Y$. For our purposes, we only need the following special case of this theorem, which follows from \cite[Example 15.4.2]{fulton2013intersection} as stated in \cite{EDDegree_point}. 

\begin{theorem} In \Cref{eq: diagram}, let $X$ be a set of $m$ distinct points $X_i$. Then 
\begin{align}
c(\Tilde{Y})=\pi^*c(Y)+\sum_{i=1}^m\Big((1+\eta_i)(1-\eta_i)^d-1\Big),
\end{align}
where $d=\dim Y$ and $\eta_i=j_*(\rho^*[X_i])$.
\end{theorem}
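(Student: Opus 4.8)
The plan is to obtain this as the specialization of the general Porteous blow-up formula for total Chern classes (\cite[Example 15.4.2]{fulton2013intersection}) to the case where the blown-up center is a finite set of reduced points, whose normal bundles are trivial. First I would record the general statement: for the blow-up $\pi:\tilde Y\to Y$ of a smooth variety $Y$ along a smooth, regularly embedded center $X$ of codimension $d$ with normal bundle $N=N_{X/Y}$, the exceptional divisor is $\tilde X=\PP(N)$; writing $\zeta=c_1(\mathcal O_{\tilde X}(1))$ for the relative hyperplane class and keeping the notation of \Cref{eq:diagram}, Fulton's formula expresses the difference $c(\tilde Y)-\pi^*c(Y)$ as a pushforward $j_*(\beta)$, where $\beta$ is a universal polynomial in $\zeta$ and the pulled-back Chern classes $\rho^*c_k(N)$. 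This additive form is natural since $\pi$ is an isomorphism away from $\tilde X$, so the correction is supported on the exceptional divisor.

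Next I would specialize to $X=\{X_1,\dots,X_m\}$, a set of $m$ disjoint reduced points, so that $d=\dim Y$ and the embedding is regular of codimension $d$. Since each $X_i$ is a point, its normal bundle $N_i=T_{X_i}Y\cong\CC^d$ is trivial, whence $\rho^*c_k(N)=0$ for all $k\ge 1$ and $\rho^*c(N)=1$. The exceptional divisor decomposes as $\tilde X=\bigsqcup_{i=1}^m\PP(N_i)=\bigsqcup_i\PP^{d-1}$, and $\zeta$ restricts to the hyperplane class $\zeta_i$ on the $i$-th copy $E_i:=\PP(N_i)$. Feeding $\rho^*c(N)=1$ into $\beta$ collapses it to a sum of polynomials in the $\zeta_i$ alone, one summand per exceptional component.

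The remaining work is to rewrite this $\zeta$-expression in terms of $\eta_i=j_*(\rho^*[X_i])$. I would use two identities: since $X_i$ is a reduced point, $\rho^*[X_i]$ is the fundamental class of $E_i=\PP^{d-1}$, so $\eta_i=[E_i]$ is the class of the $i$-th exceptional divisor; and by the self-intersection formula its restriction is $j^*\eta_i=c_1(N_{E_i/\tilde Y})=c_1(\mathcal O_{\PP^{d-1}}(-1))=-\zeta_i$. The projection formula then yields the dictionary $\eta_i^{\,k}=j_*\big((-\zeta_i)^{k-1}\big)$ for every $k\ge 1$, which turns a pushforward of a polynomial in $\zeta_i$ into a polynomial in $\eta_i$. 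I would then expand the target as
\begin{align}
(1+\eta_i)(1-\eta_i)^d-1=\sum_{k\ge 1}a_k\,\eta_i^{\,k},\qquad a_k=(-1)^k\Big[\tbinom{d}{k}-\tbinom{d}{k-1}\Big],
\end{align}
push it back through the dictionary, and match it term-by-term against the collapsed $\beta$ (a finite binomial identity, with contributions only for $0\le k-1\le d-1$ since $\zeta_i^d=0$ on $\PP^{d-1}$). Summing over $i$ gives the claimed formula.

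A useful independent consistency check, which also shortens the verification, comes from the adjunction formula: since $\pi^*c(Y)$ restricts to $1$ on each $E_i$ (being pulled back from a point via $\pi\circ j=i\circ\rho$) and $c(E_i)=c(\PP^{d-1})=(1+\zeta_i)^d$ with $[E_i]|_{E_i}=-\zeta_i$, one computes $c(\tilde Y)|_{E_i}=(1+\zeta_i)^d(1-\zeta_i)$, which is exactly the restriction of $(1+\eta_i)(1-\eta_i)^d$ under $\eta_i|_{E_i}=-\zeta_i$. I expect the main obstacle to be purely bookkeeping: pinning down the precise normal-bundle form of $\beta$ in the general formula and carrying out the $\zeta_i\leftrightarrow\eta_i$ translation and coefficient matching without sign errors, rather than any conceptual difficulty.
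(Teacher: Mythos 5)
The paper does not actually prove this statement: it is quoted as a special case of Porteous' blow-up formula, citing \cite[Example 15.4.2]{fulton2013intersection} as stated in \cite{EDDegree_point}, with no derivation given. Your proposal supplies the derivation that the citation elides, and its structure is correct: for a zero-dimensional centre the normal bundles are trivial, the exceptional locus is a disjoint union of copies of $\PP^{d-1}$, the identity $j^*\eta_i=c_1(\mathcal O_{E_i}(E_i))=-\zeta_i$ together with the projection formula gives the dictionary $\eta_i^{\,k}=j_*\bigl((-\zeta_i)^{k-1}\bigr)$, and your expansion coefficients $a_k=(-1)^k\bigl[\binom{d}{k}-\binom{d}{k-1}\bigr]$ are right (one can sanity-check the top coefficient against $\chi$ of the blow-up of $\PP^2$ or $\PP^3$ at a point). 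The one substantive caveat concerns your adjunction-based ``consistency check'': since the correction term has the form $j_*\beta$ and $j^*j_*\beta=-\zeta\beta$ with $\zeta^d=0$ on $\PP^{d-1}$, restricting to $E_i$ annihilates the $\zeta_i^{d-1}$-component of $\beta$, i.e.\ it cannot detect an error in the coefficient of $\eta_i^{\,d}$ --- which is exactly the term contributing to the top Chern class and hence to the Euler characteristic computations this theorem is used for. So that check does not ``shorten the verification'' of the coefficient that matters most; the term-by-term matching against the specialization of Fulton's universal polynomial (the ``bookkeeping'' you defer) is genuinely needed, and is the only part of the argument not yet written out.
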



\subsection{Linear systems} In the proof of \Cref{thm: EDDthm}, we use the language of linear systems. We don't go through many details here, instead, we recall basic definitions. An introduction to line bundles and other relevant concepts are given in the lecture notes of Vakil \cite{vakil1999introduction}. For a rational function $s$ on a projective variety $X$, we define $(s)=\sum \mathrm{ord}_Z(s) Z\in \mathrm{CH}(X)$, where $\mathrm{ord}_Z(s)$ is the order of $s$ at the point $Z$. Two divisors $D,D'$ are \textit{linearly equivalent} if $D'= D+(s)$ for some rational function $s$.

\begin{definition} Let $X$ be a smooth variety. A complete linear system $|D_0|$ of an effective divisor $D_0$ ($\sum n_iD_i$ so that $n_i\ge 0$) is the set of all effective divisors linearly equivalent to it. A linear system is a linear subspace of a complete linear system.
\end{definition}

\begin{definition} Let $D_0$ be an effective divisor. $\Gamma(X,\mathcal{O}(D_0))$ is the of global sections $s$ on $X$ with $(s)+D_0\ge 0$.
\end{definition}

 $\Gamma(X,\mathcal{O}(D_0))$ is interpreted as a complete linear system via the map $f\mapsto (f)+D_0 \in \mathrm{CH}(X)$. Since $(f)=(g)$ if and only if they differ by a non-zero scalar (zeros and poles determine a rational function), $\Gamma(X,\mathcal{O}(D_0))$ can be viewed as a projective space. Subspaces of $\Gamma(X,\mathcal{O}(D_0))$ are also called linear systems.

The \textit{base locus} of a linear system is the intersection of the zero sets of all global sections on $X$ of the linear system. A linear system is \textit{basepoint free} if the base locus is empty. In other words, for every point  $x\in X$, there is a global section $s$ such that $s(x)\neq 0$.

\begin{lemma}\label{le: basepoint-free} The restriction of a basepoint free linear system is basepoint free.  
\end{lemma}

\begin{proof} Let $V$ be a subvariety of $X$. Take $v\in V$. Since $X$ is basepoint free, there is a global section $s$ of the linear system for $X$ such that $s(x)\neq 0$. Restricting this section to $V$ we get a global section $s|_V$ for the restricted linear system that is non-zero on $v$.
\end{proof}

A linear system on a smooth variety $X$ is called very ample if it allows the variety to be embedded into a projective space in a way that preserves its geometry. For a basepoint free linear system, let $a_0,\ldots,a_n$ be global sections that do not simultaneously vanish. The linear system is \textit{very ample} if
\begin{align}\begin{aligned}
    g:X&\to \PP^n,\\
    x&\mapsto (a_0(x):\cdots:a_n(x))
\end{aligned}
\end{align}
is a \textit{closed immersion}. This means that $g$ is isomorphic onto its image, or that $g^{*}(O(1))$, the pullback of the hyperplane bundle $O(1)$ on $\mathbb P^n$, is isomorphic to $L$. The restriction of a very ample linear system is very ample. 

We define a divisor $D_0$ to be basepoint free if its linear system $\Gamma(X,\mathcal{O}(D_0))$ is basepoint free. We define a divisor to be very ample analogously. 

One importance of basepoint free linear systems comes in the form of this celebrated result: 

\begin{theorem}[Bertini's Theorem {\cite[Section 1.1]{griffiths2014principles}}]\label{thm: Bertini} 
Let $X$ be a smooth complex variety and let $\Gamma$ be a positive dimensional linear system
on $X$. Then the general element of $~\Gamma$ is smooth away
from the base locus.
\end{theorem}


\subsection{Whitney stratification}\label{ss: Whitney} 

A natural way to partition a variety $X$ is via the inclusion
\begin{align}
    X\supseteq \mathrm{sing}(X)\supseteq \mathrm{sing}(\mathrm{sing}(X))\supseteq \cdots,
\end{align}
where $\mathrm{sing}$ denote the singular locus. However, not all points of $\mathrm{sing}(X)\setminus \mathrm{sing}(\mathrm{sing}(X))$ necessarily look locally the same. A more fine grained version of this partition is called a \textit{Whitney stratification} \cite{trotman2020stratification}. We don't recall the definition here, because all we need to know is that a Whitney stratification of a smooth variety $X$ is $\mathcal{S}=\{X\}$ and the Whitney stratification of a variety $X$ whose singular locus is a finite set of point is $\mathcal{S}=\{X_{\mathrm{reg}},\{s_1\},\ldots,\{s_r\}\}$, where $X_{\mathrm{reg}}$ is the set of smooth points of $X$ and $s_1,\ldots,s_r$ are the singular points of $X$. By a theorem of Whitney, any algebraic variety has a Whintey stratification \cite{whitney1992local,whitney1992tangents}.

Before we state the main theorem on Whitney stratifications that we use in this article, we define \textit{Milnor fibers} \cite[Chapter 10]{maxim2019intersection}. Let $X$ be a smooth variety and $V$ a divisor on $X$. Choose any Whitney stratum $S \in \mathcal S$ and any point $x \in S$. In a
sufficiently small ball $B_{\epsilon,x}$ centered at $x$, the hypersurface $V$ is equal to the zero locus of a holomorphic function $f$. The Milnor fiber of $V$ at $x \in  S$ is given by
\begin{align}   
F_x:=B_{\epsilon,x} \cap \{f = t\},
\end{align}
for small $|t|$ greater than 0. The Euler characteristic of $F_x$ is independent of the choice 
of the local equation $f$ at $x$, and it is constant along the given stratum containing $x$.

\begin{theorem}[{\hspace{1sp}\cite{parusinski1995formula},\cite[Theorem 10.4.4]{maxim2019intersection}}] \label{thm: Whitney} Let $X$ be a smooth complex projective variety,
and let $V$ be a very ample divisor in $X$. Let $\sqcup_{s\in \mathcal{S}} S$ be a Whitney stratification
of $V$. Let $W$ be another divisor on $X$ that is linearly equivalent to $V$. Suppose $W$ is
smooth and $W$ intersects $V$ transversally in the stratified sense (with respect to the above
Whitney stratification). Then we have
\begin{align}
    \chi(W)-\chi(V)=\sum_{S\in \mathcal S} \mu_S \chi(S\setminus W), 
\end{align}
where $\mu_S$ is the Euler characteristic of the reduced cohomology of the Milnor fiber at any
point $x \in S$.
\end{theorem}

The Euler characteristic of the \textit{reduced cohomology} is the normal Euler characteristic minus one.


\section{Computation of Euclidean Distance Degrees}\label{s: EDD} 

Let $X\in \PP^3$ be a point and $L\in \Gr(1,\PP^3)$ a line. Let $\mathcal
C$ be a generic arrangement of $m$ cameras. For the sake of notation, we write $\mathcal M_m^L$ for $\mathcal{M}_\mathcal{C}^L$ and $\mathcal{L}_m^X$ for $\mathcal{L}_\mathcal{C}^X$. We assume from now on that $m\ge 3$ for the anchored line multiview variety $\mathcal{L}_m^X$. We recall notation from \cite{EDDegree_point}. Write each $\PP^2$ as $\CC^2 \cup\PP_\infty^1$, where $\CC^2$ is the chosen affine chart and $\PP_\infty^1$ is the line at
infinity. Denote the hypersurface $\PP^2\times \cdots\times \PP_\infty^1\times \cdots\times \PP^2$ in $(\PP^2)^m$ by $H_{\infty,i}$, where $\PP_\infty^1$ is the $i$-th factor. Let $H_\infty=\cup_{i=1}^m H_{\infty,i}$. Denote by $H_Q$ the closure of the
hypersurface $\sum_{i=1}^{2m}(z_i - \beta_i)^2+\beta_0=0$ in $(\PP^2)^m$. In the remainder of this proof, we will
use the following notation:
\begin{align}\begin{aligned}
    D_Q^L &:= \mathcal{M}_m^L \cap  H_Q, D_{\infty,i}^L := \mathcal{M}_m^L \cap H_{\infty,i},\\
    D_\infty^L &:= \mathcal{M}_m^L\cap H_{\infty},    
\end{aligned}
\end{align}
for the anchored point multiview variety, and
\begin{align}\begin{aligned}
    D_Q^X &:= \mathcal{L}_m^X \cap  H_Q, D_{\infty,i}^X := \mathcal{L}_m^X \cap H_{\infty,i},\\
    D_\infty^X &:= \mathcal{L}_m^X\cap H_{\infty},
\end{aligned}
\end{align}
for the anchored line multiview variety. Write $M_m^L$ and $L_m^X$ for the corresponding affine varieties. Notice that $H_\infty$ is the complement of the affine chart $\CC^{2m}$ in $(\PP^2)^m$, thus $D_\infty^L$ is the
complement of $M_m^L$ in $\mathcal{M}_m^L$ and $D_\infty^X$ is the
complement of $L_m^X$ in $\mathcal{L}_m^X$ and.
As derived in \cite{EDDegree_point}, we have,
\begin{align}
    \chi(M_m^L\cap U_\beta)=&\label{eq:eulML1}\\
    \chi(\mathcal M_m^L)-\chi(D_\infty^L)&+\chi(D_Q^L\cap D_\infty^L)-\chi(D_Q^L),\label{eq:eulML2}\\
    \chi(L_m^X\cap U_\beta)=&\label{eq:eulLX1}\\
    \chi(\mathcal{L}_m^X)-\chi(D_\infty^X)&+\chi(D_Q^X\cap D_\infty^X)-\chi(D_Q^X).\label{eq:eulLX2}
\end{align}
The structure of the proof of \Cref{thm: EDDthm} is to calculate the four terms of \Cref{eq:eulML2,eq:eulLX2}. 

\begin{lemma}\label{lem: anchored-chiY_n} For a fixed $X$ and $L$, let $\mathcal{C}$ be a generic arrangement of $m$ cameras.

\begin{enumerate}
    \item $\chi(\mathcal{M}_\mathcal{C}^L)=2$;
    \item $\chi(\mathcal{L}_\mathcal{C}^X)=3+m$.
\end{enumerate}
\end{lemma}

\begin{proof} $ $

$\textit{1}.$ $\mathcal{M}_\mathcal{C}^L$ is isomorphic to $\PP^1$ and we are done by \Cref{le: Eul3}.

$\textit{2}.$ Recall that we assume $m\ge 3$. By genericity, $c_i$ are not collinear. Therefore the back-projected planes of an element $\ell\in \mathcal{L}_\mathcal{C}^X$ meet in exactly a line. Consider the partition 
\begin{align}\mathcal{L}_\mathcal{C}^X= U\cup \bigcup_{i=1}^m U_i,
\end{align}
where $U$ is the set of $\ell\in \mathcal{L}_\mathcal{C}^X$ whose back-projected planes meet in a line away from any center, and $U_i$ is the set of $\ell\in \mathcal{L}_\mathcal{C}^X$ whose back-projected planes meet in $c_i\vee X$. By \Cref{le: Eul1}, $\chi(\mathcal{L}_\mathcal{C}^X)=\chi(U)+\sum \chi(U_i)$. Since $\Upsilon_\mathcal{C}$ is injective on the subset of $\Lambda(X)\setminus\cup ( c_i\vee X)$, we see via the isomorphism $U\cong \PP^2$ that $U$ is isomorphic to $\PP^2$ minus $m$ points. By \Cref{le: Eul2}, $\chi(U)=3-m$. If instead $\ell\in \mathcal{L}_\mathcal{C}^X$ meet exactly in $c_i\vee X$, then for $j\neq i$ we have $\ell_j=C_j\cdot (c_i\vee X)$, and $\ell_i$ is any line in $\Lambda(C_iX)$. However, $\Lambda(C_iX)\cong \PP^1$, implying $\chi(U_i)=2$. In total, $\chi(\mathcal{L}_\mathcal{C}^X)=3-m+2m=3+m$.
\end{proof}

In the next step, we compute the second terms of the right-hand sides of \Cref{eq:eulML2,eq:eulLX2}. 

\begin{lemma}\label{lem: anchored-chiDinfty} For a fixed $X$ and $L$, let $\mathcal C$ be a generic arrangement of cameras of $m$ cameras. Then

\begin{enumerate}
    \item $\chi(D_\infty^L)=m$;
    \item $\chi(D_\infty^X)=2m-\binom{m}{2}$.
\end{enumerate}
\end{lemma}

\begin{proof} $ $

$\textit{1}.$ Each $D_{\infty, i}^L$ is a generic point of $\mathcal{M}_m^L$. By additivity of the Euler characteristic, we have that 
\begin{align}
    \chi(D_{\infty}^L)=\chi(\cup_{i=1}^m D_{\infty, i}^L) =\sum _{i=1}^m\chi( D_{\infty, i}^L)=  m.
\end{align}

$\textit{2}.$ Each $D_{\infty, i}^X$ is a curve inside $\mathcal{L}_m^X$. This curve corresponds precisely to fixing the $i$-th back-projected plane $H_i$ to be generic through $c_i$ and $X$. Such a plane contains no other center, and a line in this plane uniquely determines all other back-projected planes. Therefore $D_{\infty, i}^L$ is isomorphic to the set of lines in $H_i$ through $X$, which in turn is isomorphic to $\mathbb{P}^1$. We get $\chi(D_{\infty,i}^X) = 2$. Moreover, $\chi(D_{\infty,i}^X)$ only have pairwise intersections, and two generic back-projected planes $H_i,H_j$ through $c_i,X$ and $c_j,X$, respectively, meet in exactly a generic line through $X$. Therefore $D_{\infty,i}^X \cap D_{\infty,j}^X $ consists of a single element. We then get
\begin{align}
    \chi(D_\infty^X)&=\chi(\cup_{i=1}^m D_{\infty,i}^X) \\
    &=\sum_{i=1}^m \chi( D_{\infty,i}^X) - \sum_{i<j} \chi\left( D_{\infty,i}^X \cap D_{\infty,j}^X \right) \\
    &= \sum_{i=1}^m 2 - \sum_{i<j} 1 =  2m - \binom{m}{2},
\end{align}
by additivity.
\end{proof}
We recall that $H_Q$ is the closure of the affine hypersurface
\begin{align}\label{eq: quad}
 \sum_{1\le i\le 2m}
(z_i-\beta_i)^2+ \beta_0 = 0,
\end{align}
in $(\PP^2)^m$. We introduce homogeneous coordinates $x_i, y_{2i-1}, y_{2i}$ with $z_{2i-1} =y_{2i-1}/x_i$ and $z_{2i} =y_{2i}/x_i$ for $1\le i\le m$. Write $\textbf{x}=x_1\cdots x_m$. Then the homogenization of \Cref{eq: quad}, and hence the equation of
$H_Q$, is
\begin{align}\label{eq: homog1}
\begin{aligned}
   \sum_{i=1}^m \Big(&(y_{2i-1}- \beta_{2i-1}x_i)^2
 + (y_{2i}- \beta_{2i}x_i)^2\Big)\frac{\textbf{x}^2}{x_i^2}+\\
 +&\beta_0 \textbf{x}^2=0. 
\end{aligned}
\end{align}

\begin{lemma}
\label{lem: achored-chiDinftycapDQ}
$ $
\begin{enumerate}
    \item $\chi(D_Q^L\cap D_\infty^L)=0$;
    \item $\chi(D_Q^X\cap D_\infty^X)=\binom{m}{2}$.
\end{enumerate}

\end{lemma}

\begin{proof} We homogenize the equation defining $H_Q$ as in \Cref{eq: homog1}, and assume without loss of generality that $H_{\infty,i}$ is defined by the equation $x_i=0$. We have by inspection,
\begin{align}
    H_Q\cap H_{\infty,i}=&\{y_{2i-1}+\sqrt{-1}y_{2i}=x_i=0\}\cup\\
    \cup &\{y_{2i-1}-\sqrt{-1}y_{2i}=x_i=0\}\cup \\
    \cup&\bigcup_{j\neq i}\{x_i=x_j=0\}.
\end{align}	
Let,
\begin{align*}
    	K_i^{L,+}:= &\mathcal{M}_m^L \cap \{y_{2i-1}+\sqrt{-1}y_{2i}=x_i=0\},\\
		K_i^{L,-}:= &\mathcal{M}_m^L \cap \{y_{2i-1}-\sqrt{-1}y_{2i}=x_i=0\}, \\
		A_{i,j}^L:= &\mathcal{M}_m^L \cap \{x_i=x_j=0\}, j\neq i.
\end{align*}
Write $K_i^{X,+},K_i^{X,-}$ and $A_{i,j}^X$ analogously for intersecting with $\mathcal{L}_m^X$ instead of $\mathcal{M}_m^L$. With this notation,
\begin{align}
    \mathcal{M}_m^L\cap H_Q\cap H_{\infty,i}=K_i^{L,+} \cup K_i^{L,-} \cup \bigcup_{i\neq j} A_{i,j}^L,\\
      \mathcal{L}_m^X\cap H_Q\cap H_{\infty,i}=K_i^{X,+} \cup K_i^{X,-} \cup \bigcup_{i\neq j} A_{i,j}^X.
\end{align}
As we shall see below, $K_i^{L,\pm},K_i^{X,\pm}=\emptyset$. Therefore, by inclusion/exclusion, 
\begin{align}
    \chi(D_Q^L\cap D_\infty^L)&=\chi(\bigcup_{i\neq j} A_{i,j}^L),\label{eq: AijL}\\
    \chi(D_Q^X\cap D_\infty^X)&=\chi(\bigcup_{i\neq j} A_{i,j}^X).\label{eq: AijX}
\end{align}

$\textit{1}.$ By construction, the $i$-th factor of any element of $K_i^{L,\pm}\subseteq (\PP^2)^m$ is fixed equal to $[0:\mp\sqrt{-1}:1]$. However, by genericity, this point does not lie in $C_i\cdot L$, implying that $K_i^{L,\pm}=\emptyset$. Regarding $A_{i,j}^L$, setting $x_i=0,x_j=0$ corresponds to fixing two generic image lines in the corresponding image planes. The back-projected planes of those image lines meet in a generic line, and such a line does not meet $L$. This implies $A_{i,j}^L=\emptyset$, and we are done by \Cref{eq: AijL}.

$\textit{2}.$ By construction, the $i$-th factor of any element of $K_i^{X,\pm}\subseteq (\PP^2)^m$ is fixed equal to $[0:\mp\sqrt{-1}:1]$. However, by genericity, the line this vector defines does not contain $C_iX$, implying that $K_i^{X,\pm}=\emptyset$. Regarding $A_{i,j}^X$, setting $x_i=0,x_j=0$ corresponds to intersecting $\Lambda(C_iX),\Lambda(C_jX)$ with generic hyperplanes. They intersect in the single elements $\ell_i,\ell_j$. The  back-projected planes of $\ell_i,\ell_j$ meet in a generic line through $X$. Therefore $A_{i,j}^X$ is a generic point of $\mathcal{L}_m^X$. All $A_{i,j}^X$ are disjoint, and we are done by \Cref{eq: AijX}.
\end{proof}


The hypersurface $H_Q$ in the hypersurface $H_Q$ is defined by \Cref{eq: homog1}. It follows by \Cref{ss: Chow} that we have the following linear equivalence of divisors in $(\PP^2)^m$: 
\begin{align}
    H_Q\equiv 2H_{\infty,1}+\cdots +2H_{\infty,m}.
\end{align}
Then as divisors of the anchored multiview varieties,
\begin{align}
    D_Q^L&\equiv 2 \mathcal{M}_m^L\cap H_{\infty,1}+\cdots+2 \mathcal{M}_m^L\cap H_{\infty,m},\\
    D_Q^X&\equiv 2 \mathcal{L}_m^X\cap H_{\infty,1}+\cdots+2 \mathcal{L}_m^X\cap H_{\infty,m}.
\end{align}
Consider the well-defined projections $\pi_L:\mathcal{M}_m^L\to L$, and $\pi_X: \mathcal{L}_m^X\to \Lambda(X)$, sending image points and image lines to the intersection of their back-projected lines or planes. In the Chow ring of $\PP^3$, every element of $L$ is equivalent. We denote by $D_H^L$ the preimage of a generic hyperplane in $L$, i.e. a generic point of $L$. In the Chow ring of $\Gr(1,\PP^3)$, every element of $\Lambda(X)$ is equivalent. In particular, we have $\pi_X^*[H]=\pi_X^*[H']$, where $H, H'$ are hyperplanes of lines in $\Lambda(X)$, where a hyperplane of lines is the set of lines in $\Lambda(X)$ contained in some hyperplane of $\PP^3$ through $X$. Let $H$ be a generic plane of lines in $\Lambda(X)$ and $D_H^X=\pi_X^{*}(H)$. Let $H'$ be a generic among the planes of lines that contain $c_i\vee X$ for some $i$. In the Chow ring of $\mathcal L_m^X$,  $\pi_X^{*}(H')$ is the union of $\mathcal{L}_m^X\cap H_{\infty,i}$ and the variety $E_i^X$ of elements $\ell$ whose back-projected planes meet exactly in the line $c_i\vee X$. In other words, $E_i^X=\pi_X^{*}(c_i\vee X)$. We get
\begin{align}\begin{aligned}
\mathcal{M}_m^L\cap H_{\infty,i}&\equiv D_H^L,\\
\mathcal{L}_m^X\cap H_{\infty,i}&\equiv D_H^X-E_i^X.
\end{aligned}
\end{align}
It follows that,
\begin{align}\label{eq: Dqx}
\begin{aligned}
D_Q^L&\equiv 2mD_H^L,\\
    D_Q^X&\equiv 2mD_H^X-2E_i^X-\cdots -2E_i^X.
\end{aligned}
\end{align}
Note that $E_i^X\not\equiv E_j^X$ for $i\neq j$. 

\begin{lemma}\label{le: ChowID}$ $

\begin{enumerate}
    \item $[D_H^L]^2=0$;
    \item $[E_i^X]^3=0$;
    \item $[D_H^X]^3=0$;
    \item $[D_H^X][E_i^X]=0$ for $i\neq j$;
    \item $[E_i^X][E_j^X]=0$ for $i\neq j$.
\end{enumerate}
\end{lemma}

\begin{proof} $ $

$\textit{1,2,3.}$ This is a consequence of the fact that $D_H^L$ is a proper subvariety of an irreducible variety of dimension 1. Similarly, both $E_i^X$ and $D_H^X$ are proper subvarieties of an irreducible variety of dimension 2.

$\textit{4}.$ For a generic plane of lines $H$ through $X$, $D_H^X\cap E_i^X$ is empty. This suffices by \Cref{thm: prod}. 

$\textit{5}.$ We use the fact that $E_j^X\equiv D_H^X-\mathcal{L}_m^X\cap H_{\infty,j}$. By~$\textit{4}.$, intersecting the right-hand side with $E_i^X$ yields the following $E_i^X\cap (\mathcal{L}_m^X\cap H_{\infty,j})$. Now the $j$-th factor of $\mathcal L_m^X\cap H_{\infty,j}$ consists of a fixed generic line through $C_iX$. Its back-projected plane does not contain $c_i\vee X$. It follows that the intersection must be empty. This suffices by \Cref{thm: prod}. 
\end{proof}

\begin{proposition}\label{prop: Chow} In the chow ring of $(\PP^2)^m$, we have the following identities.
\begin{enumerate}
    \item  \label{eq:chernM}
        $c(\mathcal{M}_m^L)=1+2[D_H^L]$;
         
    \item \label{eq:chernL} $
        c(\mathcal{L}_m^X)=(1+[D_H^X])^3-\sum_{i=1}^m \Big([E_i^X]+[E_i^X]^2\Big).$
\end{enumerate}
\end{proposition}

\begin{proof} $ $

$\textit{1}.$ Follows from the fact that $\mathcal{M}_m^L$ is isomorphic to $\PP^1$, which in turn has Chern class $1+2[x]$ by \Cref{le: ChernP}, where $[x]$ represents a point of $\PP^1$.

$\textit{2}.$ Recalling that we in the proof of \Cref{prop: smooth} viewed $\mathcal{L}_m^X$ as a blow-up, the Chern class formula from \Cref{ss: Chern} gives us 
\begin{align}
    c(\mathcal{L}_m^X)=&(1+[D_H^X])^3+\\
    &+\sum_{i=1}^m \Big((1+[E_i^X])(1-[E_i^X])^2-1\Big).
\end{align}
After simplification and the fact that $[E_i^X]^3=0$, we get the statement.
\end{proof}

As a sanity check, we note that \Cref{prop: Chow} gives us the correct Euler characteristics via the Chern-Gauss-Bonnet theorem. The top Chern class of $\mathcal{M}_m^L$ is $2[D_H^L]$, where $[D_H^L]$ is the class of a single point. The top Chern class of $\mathcal{L}_m^X$ is $3[D_H^X]^2+\sum -[E_i^X]^2$. Now $[D_H^X]^2$ corresponds to the preimage of the intersection of two generic planes through $X$; it corresponds to a single point. Next, due to the fact that $E_i^X\equiv D_H^X-\mathcal{L}_m^X\cap H_{\infty,i}$, we have that $[E_i^X]^2\equiv [D_H^X]^2-2[D_H^X\cap \mathcal{L}_m^X\cap H_{\infty,i}]+[\mathcal{L}_m^X\cap H_{\infty,i}]^2$. However, $[D_H^X\cap \mathcal{L}_m^X\cap H_{\infty,i}]$ is a single point and the intersection $(\mathcal{L}_m^X\cap H_{\infty,i})^2$ is empty. Therefore $[E_i^X]^2$ is equal to minus a single point. The Chern-Gauss-Bonnet theorem then states that
\begin{align}\begin{aligned}
    \chi(\mathcal{M}_m^L)&=2,\\
     \chi(\mathcal{L}_m^X)&=3+m.    
\end{aligned}
\end{align}

We aim to use \Cref{thm: Whitney} to determine the Euler characteristic of $D_Q^X$ and $D_Q^L$. We start by considering generic divisors in their linear systems.

\begin{lemma} $ $

\begin{enumerate}
     \item A generic divisor $D'^{L}$ in the linear system $\Gamma (\mathcal{M}_m^L,\mathcal O(D_Q^L ))$ is smooth;
    \item A generic divisor $D'^X$ in the linear system $\Gamma (\mathcal{L}_m^X,\mathcal O(D_Q^X ))$ is smooth.
    \end{enumerate}
\end{lemma}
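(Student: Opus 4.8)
The plan is to deduce both statements directly from Bertini's Theorem (\Cref{thm:Bertini}), combining the smoothness of the ambient anchored varieties with the very ampleness established in \Cref{le:veryample}. Bertini's theorem requires three ingredients: that the ambient variety is smooth, that the linear system is positive dimensional, and that we control its base locus, since the conclusion only guarantees smoothness \emph{away from the base locus}.

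First I would recall that $\mathcal{M}_m^L$ and $\mathcal{L}_m^X$ are both smooth. This is \Cref{prop:smooth}; for the line variety we use the standing assumption $m \ge 3$ together with genericity of the cameras, which ensures that the centers and $X$ span $\PP^3$. Next, by \Cref{le:veryample}, the divisors $D_Q^L$ and $D_Q^X$ are very ample. By the definition of very ampleness recalled before \Cref{le:basepoint_free}, there exist global sections $s_0,\ldots,s_n \in \Gamma(\mathcal{M}_m^L,\mathcal O(D_Q^L))$ (and analogously in $\Gamma(\mathcal{L}_m^X,\mathcal O(D_Q^X))$) that do not vanish simultaneously and realize an embedding into $\PP^n$. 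The condition that these sections have no common zero says exactly that the base locus of the complete linear system is empty; that is, $\Gamma(\mathcal{M}_m^L,\mathcal O(D_Q^L))$ and $\Gamma(\mathcal{L}_m^X,\mathcal O(D_Q^X))$ are basepoint free. Moreover, since $\mathcal{M}_m^L$ and $\mathcal{L}_m^X$ are positive dimensional and embed into some $\PP^n$, we necessarily have $n \ge 1$, so these linear systems are positive dimensional.

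With these observations in place, I would apply \Cref{thm:Bertini} to the smooth variety $\mathcal{M}_m^L$ (respectively $\mathcal{L}_m^X$) and the positive dimensional linear system $\Gamma(\mathcal{M}_m^L,\mathcal O(D_Q^L))$ (respectively $\Gamma(\mathcal{L}_m^X,\mathcal O(D_Q^X))$). The theorem guarantees that the general element is smooth away from the base locus. Since the base locus is empty by very ampleness, the general divisor $D'^{L}$ (respectively $D'^X$) is smooth everywhere, which is precisely the claim.

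There is essentially no obstacle once very ampleness is granted: the substantive content is already packaged into \Cref{le:veryample} and \Cref{thm:Bertini}. The only point requiring care is the logical bookkeeping that \emph{very ample} implies \emph{basepoint free}, which in turn forces the base locus to be empty, so that Bertini's conclusion ``smooth away from the base locus'' upgrades to ``smooth.'' Accordingly, I would state the emptiness of the base locus explicitly before invoking Bertini, rather than leaving it implicit.
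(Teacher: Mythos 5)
Your proposal is correct and follows exactly the paper's argument: the paper's proof is the one-line observation that very ample divisors have basepoint-free linear systems, so \Cref{le:veryample} and Bertini's theorem (\Cref{thm:Bertini}) give smoothness everywhere. You simply make explicit the supporting facts (smoothness of the ambient anchored varieties, positive dimensionality of the system, and emptiness of the base locus) that the paper leaves implicit.
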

\begin{proof} We recall that
\begin{align}
    H_Q\equiv 2H_{1,\infty}+\cdots +2H_{m,\infty}.
\end{align}
Any variety that is the union of hypersurfaces from $i=1,\ldots,m$ of double lines in factor $i$ is linearly equivalent to $H_Q$. It is clear that intersecting all such unions gives the empty set, and therefore the base locus of the divisor $H_Q$ is empty. By \Cref{le: basepoint-free}, the restriction of $H_Q$ to $\mathcal M_m^L$ and $\mathcal L_m^X$ gives a basepoint-free linear system. We are done by Bertini's theorem.
\end{proof}

\begin{proposition}\label{prop: eulgendiv} $ $

\begin{enumerate}
    \item If $D'^{L}$ is a generic divisor in the linear system $\Gamma (\mathcal{M}_m^L,\mathcal O(D_Q^L ))$, then $\chi(D'^L)=2m$;
    \item If $D'^X$ is a generic divisor in the linear system $\Gamma (\mathcal{L}_m^X,\mathcal O(D_Q^X ))$, then $\chi(D'^X)=-4m^2+8m$.
\end{enumerate}
\end{proposition}

\begin{proof} $ $

$\textit{1}.$ For the purpose of applying the Chern-Gauss-Bonnet theorem, we want to find the top Chern class of $D'^L$, which is $c_0(D'^L)=1$. Using $D'^X\equiv D_H^X$, it follows that $\chi(D'^L)$ equals
\begin{align}
    \int_{D'^L} 1|_{D'^L} =\int_{\mathcal{M}_m^L}1 [D'^X]= \int_{\mathcal{M}_m^L} 2m[D_H^X] ,\label{eq: finalCGBpoint}
\end{align}
which equals $2m$ since $[D_H^X]$ is the class of one simple point in $\mathcal M_m^L$.

$\textit{2}.$ By the adjunction formula and considering the fact that $D'^X\equiv D_Q^X$, we have 
\begin{align}\label{eq: D'rest}
\begin{aligned}
     c(D'^X) &= \frac{c(\mathcal L_m^X|_{D'^X})}{(1+[D'^X])|_{D'^X}}\\
     &=\Big(c(\mathcal L_m^X)(1+[D_Q^X])^{-1}\Big)\Big|_{D'^X}.
\end{aligned}
\end{align}
Throughout this proof, we use \Cref{le: ChowID}. The identity $(1+u)^{-1}=1-u+u^2-\cdots$ and \Cref{eq: Dqx} imply  
\begin{align}\label{eq: 1+inverse}
\begin{aligned}
     (1+[D_Q^X])^{-1}=&1-2m[D_H^X]+2\sum [E_i^X]+\\
     &+4m^2[D_H^X]^2+4\sum [E_i^X]^2.    
\end{aligned}
\end{align}
For the purpose of applying the Chern-Gauss-Bonnet theorem, we want to find the top Chern class of $D'^X$, which is $c_1(D'^X)$. However, this is the first Chern class of $c(\mathcal L_m^X)(1+[D_Q^X])^{-1}$ restricted to $D'^X$, by \Cref{eq: D'rest}. Now using \Cref{eq: 1+inverse} and \Cref{prop: Chow}, the first Chern class of $c(\mathcal L_m^X)(1+[D_Q^X])^{-1}$ can be written
\begin{align}
(-2m+3)[D_H^X] +\sum [E_i^X].
\end{align}
It follows that $\chi(D'^X)$ equals
\begin{align}
\begin{aligned}
    &\int_{D'^X} \Big((-2m+3)[D_H^X] +\sum [E_i^X]\Big)\Big|_{D'^X}=\\
    = &\int_{\mathcal{L}_m^X} \Big((-2m+3)[D_H^X] +\sum [E_i^X]\Big)[D'^X]\\
    = &\int_{\mathcal{L}_m^X}(-4m^2+6m)[D_H^X]^2 -2\sum [E_i^X]^2,\label{eq: finalCGB}
\end{aligned}
\end{align}
where we in the last equality used \Cref{eq: Dqx}. Recall then that $\deg [D_H^X]^2=1$ and $\deg [E_i^X]^2=-1$. So \Cref{eq: finalCGB} adds to $-4m^2+6m+2m$. 
\end{proof}

If we consider $y_{2i-1}, y_{2i}$ and $x_i$ as sections of line bundles $\mathcal{M}_m^L$ and $\mathcal L_m^X$, then $D_Q^L = \mathcal{M}_m^L \cap H_Q$, respectively $D_Q^X = \mathcal{L}_m^X \cap H_Q$, is a
general divisor in the linear system given by the subspace $\Gamma'^L$ of $\Gamma(\mathcal{M}_m^L, \mathcal O(2mD_H^L))$, respectively $\Gamma'^X$ of $\Gamma(\mathcal{L}_m^X, \mathcal O(2mD_H^X-2E_i^X-\cdots-2E_i^X))$, generated by the sections
\begin{align}\label{eq: gen-sec}
\begin{aligned}
   1.\; &(y_1^2 + y_2^2) x_2^2\cdots x_m^2 +\cdots + \\
   &+(y_{2m-1}^2 + y_{2m}^2) x_1^2\cdots x_{m-1}^2,\\
   2.\; &x_1^2\cdots x_m^2,\\
   3.\; & \frac{y_{2i-1}}{x_i}x_1^2\cdots x_m^2 \textnormal{ for }i=1,\ldots,m,\\
   4.\; & \frac{y_{2i}}{x_i}x_1^2\cdots x_m^2 \textnormal{ for }i=1,\ldots,m.
\end{aligned}
\end{align}
To be precise, $D_Q^L$ and $D_Q^X$ are defined through global sections that determined are by $H_Q$, and $H_Q$ is a linear combination of the generators of \Cref{eq: gen-sec} with generic coefficients as in \Cref{eq: homog1}.

\begin{proposition} $ $

\begin{enumerate}
    \item $D_Q^L$ is smooth;
    \item The singular locus of $D_Q^X$ is the set of $\binom{m}{2}$ points
\begin{align}\label{eq: singloc}
    \bigcup_{i\neq j} D_{\infty,i}^X\cap D_{\infty,j}^X.
\end{align}
\end{enumerate}

\end{proposition}

\begin{proof} $ $ 

$\textit{1}.$ The base locus of $\Gamma'^L$ is $\cup D_{\infty,i}^L\cap D_{\infty,j}^L$. Indeed, this is precisely the zero locus of the polynomials of \Cref{eq: gen-sec}. However, each $D_{\infty,i}^L\cap D_{\infty,j}^L$ is empty. By Bertini's theorem, $D_Q^L$ is smooth away from this empty set. 

$\textit{2}.$ Similarly, the base locus of $\Gamma'^X$ is $\cup D_{\infty,i}^X\cap D_{\infty,j}^X$, and each $D_{\infty,i}^X\cap D_{\infty,j}^X$ is a point. On the other hand, $D_Q^X$
has multiplicity at least 2 along $\cup D_{\infty,i}^X\cap D_{\infty,j}^X$. We can see this by looking at the Jacobian condition. The vanishing ideal of $\mathcal{L}_m^X$ together with the additional equation of $H_Q$ defines $D_Q^X$, a variety of dimension $1$. At $S_{i,j}$, the gradient of the generators of $\mathcal{L}_m^X$ give the correct corank $2$ since it is smooth, but the additional equation has gradient zero so that the corank is not equal to 1. 
\end{proof}

\begin{proposition} $ $

\begin{enumerate}
    \item A Whitney stratification of $D_Q^L$ is the single stratum $S_{\textnormal{reg}}=D_Q^L$,
    \item A Whitney stratification of $D_Q^X$ consists of the stratum of smooth points $S_{\textnormal{reg}}$ and $S_{i,j}=D_{\infty,i}^X\cap D_{\infty,j}^X $.
\end{enumerate}\label{prop: Whitney}
\end{proposition}

\begin{proof} This is stated in \Cref{ss: Whitney}.
\end{proof}

\begin{proposition}\label{prop: redMilnor} The Euler characteristics of the reduced cohomology of the Milnor fibers of the points in \Cref{eq: singloc} are $-1$. 
\end{proposition}

\begin{proof} Near
\begin{align}
    S_{i,j}=D_{\infty,i}^X\cap D_{\infty,j}^X,
\end{align}
the functions $x=\frac{x_i}{y_{2i}},y=\frac{x_j}{y_{2j}}$ form a coordinate frame of $\mathcal{L}_{m}^X$, meaning the values of $x,y$ determine a unique point of $\mathcal{L}_{m}^X$. This translates to $D_Q^X$ being determined by the equation
\begin{align}
u_1x^2+u_2y^2=u_3x^2y^2,
\end{align}
for holomorphic locally non-vanishing functions $u_1,u_2,u_3$ that we can read off from the homogenization of $H_Q$ in \Cref{eq: homog1}.

Next look at $G_t=\{x^2+y^2-x^2y^2=t\}\cap B_{\epsilon}$ and the map
\begin{align}\begin{aligned}
    \psi:G_t&\to \{x+y-xy=t\}\cap B_{\epsilon^2},\\
    (x,y)&\mapsto (x^2,y^2).    
\end{aligned}
\end{align}
Denote by $G_t'$ the set $ \{x+y-xy=t\}\cap B_{\epsilon^2}$. Consider the disjoint union
\begin{align}\begin{aligned}
    G_t'=&(G_t'\cap \{x,y\neq 0\})\cup (G_t'\cap \{x=0\})\\
    &\cup (G_t'\cap \{y= 0\}).
\end{aligned}
\end{align}
Note that $G_t'$ is smooth at every point for small $\epsilon$; the gradient is $(1-x,1-y)$. Observe that $G_t'\cap \{x=0\}$ and $G_t'\cap \{y=0\}$ are by construction single points. We have that $\psi$ is 4-to-1 on the first set and 2-to-1 on the second and third of the disjoint union. This gives us 
\begin{align}\begin{aligned}
    \chi(G_t)&=4\chi((G_t'\cap \{x,y\neq 0\})+\\
    &+2 \chi(G_t'\cap \{x=0\})+2\chi (G_t'\cap \{y= 0\})\\
    &=4(1-2)+2+2=0.   
\end{aligned}
\end{align}
We conclude that the Euler characteristic of the reduced cohomology of the Milnor fiber is $-1$.
\end{proof}

\begin{proof}[Proof of \Cref{thm: EDDthm}] $ $

$\textit{1}.$ We use \Cref{thm: Whitney} and \Cref{prop: Whitney} to conclude that 
\begin{align}
\chi(D_Q^L)=\chi(D'^L).
\end{align}
We get by \Cref{eq:eulML1,eq:eulML2}, and \Cref{lem: anchored-chiY_n,lem: anchored-chiDinfty,lem: achored-chiDinftycapDQ} and \Cref{prop: eulgendiv} that
\begin{align}
    \chi(M_m^L\cap U_\beta)= 2-m+0-2m,
\end{align}
which sums to $2-3m$. Since $\dim \mathcal{M}_m^L=1$, \Cref{thm: EDDthm} says that $\mathrm{EDD}(\mathcal{M}_m^L)=3m-2$.

$\textit{2}.$ Similarly, by \Cref{thm: Whitney} and \Cref{prop: Whitney}, we get  
\begin{align}\begin{aligned}
\chi(D'^X)-\chi(D_Q^X)=&\mu_0 \chi(S_{\textnormal{reg}}\setminus D'^X)+\\
&+\sum_{i\neq j}\mu_{i,j} \chi(S_{i,j}\setminus D'^X), \end{aligned}
\end{align}
where $\mu_0$ and $\mu_{i,j}$ are defined as in \Cref{thm: Whitney}. It is not hard to check that $\mu_0$. Observe that $D'^X$ does not meet any singular points, for instance since the linear system $\Gamma(\mathcal{L}_m^X,\mathcal O(D_Q^X))$ is basepoint-free. Therefore $\chi(S_{i,j}\setminus D'^X)=\chi(S_{i,j})=1$. We get by \Cref{eq:eulML1,eq:eulML2}, and \Cref{lem: anchored-chiY_n,lem: anchored-chiDinfty,lem: achored-chiDinftycapDQ} and \Cref{prop: eulgendiv,prop: redMilnor} that 
\begin{align}
    \chi(L_m^X\cap U_\beta)&= (3+m)-(2m-\binom{m}{2})+\\
    &+\binom{m}{2}-(-4m^2+8m+\binom{m}{2}),
\end{align}
which sums to $\frac{9}{2}m^2-\frac{19}{2}m+3$. Since $\dim \mathcal{L}_m^X=2$, \Cref{thm: EDDthm} says that $\mathrm{EDD}(\mathcal{L}_m^X)=\frac{9}{2}m^2-\frac{19}{2}m+3 $.
\end{proof}

The following is now a direct consequence:

\begin{customcorollary}{\ref{cor: EDD-p}}  Let $\widetilde{\mathcal{C}}$ and $\widehat{\mathcal{C}}$ be generic arrangements of cardinality $m$.
\begin{enumerate}
    \item $\mathrm{EDD}( \mathcal{M}_{\widetilde{\mathcal{C}}}^{1, 1})=3m-2 $.
    \item If $m\ge 3$, then $\mathrm{EDD}( \mathcal{M}_{\widehat{\mathcal{C}}}^{2,1})=\frac{9}{2}m^2-\frac{19}{2}m+3.$
\end{enumerate}
\end{customcorollary}

\begin{proof} Follows from \Cref{thm: EDDthm} and \Cref{thm: EDbij}.
\end{proof}


\section{Pseudocodes}\label{s: Pseudo} Finally, in the last section we provide the pseudocode that lay the foundation for our numerical results. For each of the plots presented in the main document, we iterate 1000 (or 100) times on each of the 5 different ways of reconstruction and then we plot the relative error and speed. Note that each time we generate new random camera arrangements, a line in $\RR^3$ and $p$ points on this line.


In the pseudocodes below, we present one iteration of each method. The input is a randomly generated camera arrangement $\mathcal{C}$ of $3\times 4$ matrices, a projective line $L$ spanned by two vectors of $\RR^4$, and $p$ points $X_i\in \RR^3$ such that $[X_i;\; 1]$ lie on $L$. We use the notation that for a column vector $X\in \RR^n$, $[X; \; 1]\in \RR^{n+1}$ is the vector we get by adding a $1$ as the last coordinate. Let $\iota$ be the function that scales a vector such that its last coordinate is 1, and then removes that coordinate. When we write $L':[L';\;1]\in \Gr(1,\PP^3)$, we mean that $L'$ is a line spanned by two column vectors $l_0,l_1$ such that the $2\times 2$ lower minor of $\begin{bmatrix} l_0 & l_1 \end{bmatrix}$ is non-zero. This corresponds to choosing an affine patch of the Grassmannian of lines in $\PP^3$. 


In \Cref{alg:L1.0,alg:L1.1,alg:L1.2,alg:L1.3,alg:L1.4} we use the standard approach for simplicity, but we provide in \Cref{alg:L1.1 nonstd} the non-standard approach for (L1).1 to emphasize the distinction.

\begin{algorithm}
\SetKwInOut{Input}{Input}\SetKwInOut{Output}{Output}
\SetKwInOut{Return}{Return}
\caption{Method (L1).0.}
\label{alg:L1.0}
\Input{$\mathcal{C}=(C_1,\ldots,C_m)$, $X_1,\ldots,X_p$}
\Output{The log of the average relative error} 
  \For{$j$ \textnormal{from $1$ to $m$}}{
    \For{$i$ \textnormal{from $1$ to $p$}}{
        $q_{i,j} \gets \iota(C_j[X_i;\;1])  +\sigma(\epsilon$)\;}}
$Y_i \gets \underset{{X\in \RR^3}}{\mathrm{argmin}} \sum^{m}_{j=1} (q_{i,j}-\iota(C_j[X;\;1]))^2$\;
$e\gets \log_{10}\left(\frac{1}{p\epsilon}\sum_{i=1}^p \|Y_i-X_i\|\right)  $\;
\Return{$e$}

 \end{algorithm}

 \begin{algorithm}
\SetKwInOut{Input}{Input}\SetKwInOut{Output}{Output}
\SetKwInOut{Return}{Return}
\caption{Method (L1).1 std.}
\Input{$\mathcal{C}=(C_1,\ldots,C_m)$, $L$, $X_1,\ldots,X_p$}
\Output{The log of the average relative error} 
  \For{$j$ \textnormal{from $1$ to $m$}}{
    \For{$i$ \textnormal{from $1$ to $p$}}{
        $q_{i,j} \gets \iota(C_j[X_i;\;1]) +\sigma(\epsilon$)\;}
        $u_j \gets \iota(C_j \cdot L) +\sigma(\epsilon)$\;}
    $L_0\gets \mathrm{nullspace} \begin{bmatrix}
    C^T_1[u_1;\;1]&C^T_2[u_2;\;1]
    \end{bmatrix}^T$\;
    \For{$i$ from 1 to $p$}{$Y_i \gets \underset{{X\in \RR^3: [X;\;1]\in L_0}}{\mathrm{argmin}} \sum^{m}_{j=1} (q_{i,j}-\iota(C_j[X;\;1]))^2$\; }
    $e\gets \log_{10}\left(\frac{1}{p\epsilon}\sum_{i=1}^p \|Y_i-X_i\|\right)  $\;

\Return{$e$}

\end{algorithm}

\begin{algorithm}
\SetKwInOut{Input}{Input}\SetKwInOut{Output}{Output}
\SetKwInOut{Return}{Return}
\caption{Method (L1).2 std.}
\label{alg:L1.2}
\Input{$\mathcal{C}=(C_1,\ldots,C_m)$, $X_1,\ldots,X_p$}
\Output{The log of the average relative error}
    \For{$j$ from 1 to $m$}{
    \For{$i$ from 1 to $p$}{
    $q_{i,j} \gets \iota(C_j[X_i;\;1]) +\sigma(\epsilon$)\;}}
    $Y_1 \gets \underset{{X\in \RR^3}}{\mathrm{argmin}} \sum^{m}_{j=1} (q_{1,j}-\iota(C_j[X;\;1]))^2$\;
    $Y_2 \gets \underset{{X\in \RR^3}}{\mathrm{argmin}} \sum^{m}_{j=1} (q_{2,j}-\iota(C_j[X;\;1]))^2$\;
    $L_0\gets \mathrm{span}\{[Y_1;\;1], [Y_2;\;1]\}$\;
    \For{$i$ from 3 to $p$}{
    $Y_i \gets \underset{{X\in \RR^3: [X;\;1]\in L_0}}{\mathrm{argmin}} \sum^{m}_{j=1} (q_{i,j}-\iota(C_j[X;\;1]))^2$\; 
    }
    $e\gets \log_{10}\left(\frac{1}{p\epsilon}\sum_{i=1}^p \|Y_i-X_i\|\right) $\;
\Return{$e$}

 \end{algorithm}

 \begin{algorithm}
\SetKwInOut{Input}{Input}\SetKwInOut{Output}{Output}
\SetKwInOut{Return}{Return}
\caption{Method (L1).3 std.}
\label{alg:L1.3}
\Input{$\mathcal{C}=(C_1,\ldots,C_m)$, $L$,$X_1,\ldots,X_p$}
\Output{The log of the average relative error}
    \For{$j$ from 1 to $m$}{
    \For{$i$ from 1 to $p$}{
    $q_{i,j} \gets \iota(C_j[X_i;\;1]) +\sigma(\epsilon)$\;}
    $u_j \gets \iota(C_j \cdot L) +\sigma(\epsilon)$\;}
    $Y_1 \gets \underset{{X\in \RR^3}}{\mathrm{argmin}} \sum_{j=1}^m(q_{1,j}-\iota(C_j[X;\;1]))^2$\;
    $L_0 \gets \underset{{L': [L';\;1]\in \Lambda(Y_1)}}{\mathrm{argmin}} \sum^{m}_{j=1} (u_j-\iota(C_j \cdot [L';\;1]))^2$\;
    \For{$i$ from $2$ to $p$}{
    $Y_i \gets \underset{{X\in \RR^3:[X;\;1]\in L_0}}{\mathrm{argmin}} \sum^{m}_{j=1} (q_{i,j}-\iota(C_j[X;\;1]))^2$\;}
    $e\gets \log_{10}\left(\frac{1}{p\epsilon}\sum_{i=1}^p \|Y_i-X_i\|\right)  $\;

\Return{$e$}

 \end{algorithm}

 \begin{algorithm}
\SetKwInOut{Input}{Input}\SetKwInOut{Output}{Output}
\SetKwInOut{Return}{Return}
\caption{Method (L1).4.}
\label{alg:L1.4}
\Input{$\mathcal{C}=(C_1,\ldots,C_m)$, $L$, $X_1,\ldots,X_p$}
\Output{The log of the average relative error} 

    \For{$j$ from $1$ to $m$}{
    \For{$i$ from $1$ to $p$}{
        $q_{i,j} \gets \iota(C_j[X_i;\; 1]) +\sigma(\epsilon)$\;
        }
        $u_j \gets \iota(C_j \cdot L) +\sigma(\epsilon)$\;
        }
    $L_0 \gets \underset{{L': [L';\; 1]\in \mathrm{Gr}(1,\PP^3)}}{\mathrm{argmin}} \sum^{m}_{j=1} (u_j-\iota(C_j\cdot [L';\; 1]))^2$\;
    \For{$i$ from $1$ to $m$}{$Y_i \gets \underset{{X\in \RR^3: [X;\; 1]\in L_0}}{\mathrm{argmin}} \sum^{m}_{j=1} (q_{i,j}-\iota(C_j[X;\; 1]))^2$\;}
    $e\gets \log_{10}\left(\frac{1}{p\epsilon}\sum_{i=1}^p \|Y_i-X_i\|\right) $\;
\Return{$e$}

 \end{algorithm}

 \begin{algorithm}
\SetKwInOut{Input}{Input}\SetKwInOut{Output}{Output}
\SetKwInOut{Return}{Return}
\caption{Method (L1).1}
\label{alg:L1.1 nonstd}
\Input{$\mathcal{C}=(C_1,\ldots,C_m)$, $L$, $X_1,\ldots,X_p$}
\Output{The log of the average relative error} 

    \For{$j$ from $1$ to $m$}{
    \For{$i$ from $1$ to $p$}{
        $q_{i,j} \gets \iota(C_j[X_i;\;1]) +\sigma(\epsilon)$\;}
        $u_j \gets \iota(C_j \cdot L) +\sigma(\epsilon)$\;
    }
    $l_0,l_1\gets \textnormal{ON-basis of } \mathrm{nullspace} \begin{bmatrix}
    C^T_1[u_1;\; 1] & C^T_2[u_2;\; 1]
    \end{bmatrix}^T$\;
    \For{$j$ from $1$ to $m$}{
        $a_{j1},a_{j2}\gets \textnormal{ON-basis of } \begin{bmatrix}C_jl_0 & C_jl_1\end{bmatrix}^T$ \;
        $A_j\gets \begin{bmatrix}a_{j1} & a_{j2}\end{bmatrix}^T $\;
        $C_{\mathrm{aug},j}\gets  A_jC_j\begin{bmatrix}l_0 & l_1\end{bmatrix}$\;
        \For{$i$ from $1$ to $p$}{
        $q_{i,j}^A\gets \iota(A_jq_{i,j})$\;}
    }
    \For{$i$ from $1$ to $p$}{
        $Y_i' \gets \underset{{X\in  \RR^1}}{\mathrm{argmin}} \sum^{m}_{j=1} (q_{i,j}^A-\iota(C_{\mathrm{aug},j}[X;\;1]))^2$\;
        $Y_i\gets \iota(\begin{bmatrix}l_0 & l_1\end{bmatrix}[Y_i';\; 1])$\;
    }
    $e\gets \log_{10}\left(\frac{1}{p\epsilon}\sum_{i=1}^p \|Y_i-X_i\|\right) $\;
\Return{$e$}

 \end{algorithm}

\end{document}